\newcommand{\Z}{{\mathbf Z}}
\newcommand{\R}{\mathbf{R}}
\newcommand{\N}{\mathbf{N}}
\newcommand {\E}{\mathrm{E}}
\renewcommand{\d}{\text{\rm d}}
\newcommand{\sG}{\mathcal{G}}
\newcommand{\sA}{\mathcal{A}}
\newcommand{\sE}{\mathcal{E}}
\newtheorem{stat}{Statement}[section]
\newtheorem{proposition}[stat]{Proposition}
\newtheorem{theorem}[stat]{Theorem}
\newtheorem{lemma}[stat]{Lemma}
\theoremstyle{definition}
\newtheorem{remark}[stat]{Remark}
\numberwithin{equation}{section}
\begin{document}

\title{\bf Moment bounds for a class of fractional stochastic heat equations%
	\thanks{%
	Research supported in part by EPSRC.}}
	
\author{Mohammud Foondun\\Loughborough University
\and Wei Liu \\Loughborough University
\and McSylvester Omaba\\ Loughborough University }

\date{}
\maketitle
\begin{abstract}
We consider fractional stochastic heat equations of the form $\frac{\partial u_t(x)}{\partial t} = -(-\Delta)^{\alpha/2} u_t(x)+\lambda \sigma (u_t(x)) \dot F(t,\, x)$. Here $\dot F$ denotes the noise term. Under suitable assumptions, we show that the second moment of the solution grows exponentially with time. In particular, this answers an open problem in \cite{CoKh}. Along the way, we prove a number of other interesting properties which extend and complement results in \cite{foonjose}, \cite{Khoshnevisan:2013aa} and \cite{Khoshnevisan:2013ab}.	
	\vskip .2cm \noindent{\it Keywords:}
		Stochastic partial differential equations, intermittence \\
		
	\noindent{\it \noindent AMS 2000 subject classification:}
		Primary: 60H15; Secondary: 82B44.\\
		
	\noindent{\it Running Title:} Noise excitability and
		parabolic SPDEs.\newpage
\end{abstract}

\section{Introduction and main results.}
Let us look at the following equation, 
\begin{equation*}
\frac{\partial u_t(x)}{\partial t} = -(-\Delta)^{\alpha/2} u_t(x)+\lambda \sigma (u_t(x)) \dot w(t,\, x), \quad \text{for}\quad x \in \R \quad \text{and}\quad t>0,
\end{equation*}
with initial condition $u_0(x)$. The operator $-(-\Delta)^{\alpha/2}$ is the fractional Laplacian of order $1<\alpha \leq 2$.  $\lambda$ is a positive parameter called {\it level of noise} and $\dot{w}$ denotes space-time white noise. The function $\sigma: \R \mapsto \R$ is a Lipschitz function satisfying some growth condition which will be described later.  Since \cite{FK}, it is known that as time $t$ goes to infinity, the second moment of the solution $\E|u_t(x)|^2$ grows like $\exp{(\text{constant} \times t)}$ whenever the initial condition $u_0(x)$ is bounded below.  However, proving the exponential growth when $u_0(x)$ is not bounded below has been a hard open problem even though in \cite{CoKh}, this question has been settled for a different class of equations.  

One of the main aims of this paper is to show that the second moment grows exponentially even if the initial function is not bounded below. This answers an open question in \cite{CoKh}. In fact, we will do much more.  Instead of looking at the equation described above, we will look at the following
\begin{equation}\label{main-eq}
\frac{\partial u_t(x)}{\partial t} = -(-\Delta)^{\alpha/2} u_t(x) + \lambda \sigma (u_t(x)) \dot F(t,\, x) \quad \text{for} ~ x \in \R^d ~\text{and} ~ t > 0,
\end{equation}
where the function $\sigma$ further satisfies $l_\sigma |x|\leq\sigma(x)\leq L_\sigma|x|$ with $l_\sigma$ and $L_\sigma$ being positive constants. $\dot F$ denotes the Gaussian coloured noise satisfying the following property,
\begin{equation*}
\E [\dot F(t, \, x) \dot F(s, \, y)] = \delta_0(t - s) f(x, \, y),
\end{equation*}
where $f$ is the Riesz kernel with parameter $\beta < d$,
\begin{equation*}
f(x, \, y) := \frac{1}{|x - y|^{\beta}}.
\end{equation*}
The initial function $u_0$ is assumed to be a bounded nonnegative function such that there exists a set $A\subset \R^d$ for which \begin{equation*}
\int_{A}u_0(x)\,\d x>0. 
\end{equation*}
 Following Walsh \cite{Walsh}, we define the {\it mild solution} of \eqref{main-eq} as the predictable solution to the following integral equation,
\begin{equation}\label{mild-main}
u_t(x)=
(\sG u)_t(x)+ \lambda \int_{\R^d} \int_0^t p_{t-s}(x,\,y) \sigma(u_s(y))F(\d s\,\d y).
\end{equation}
where
\begin{equation*}
(\sG u)_t(x):=\int_{\R^d} p_t (\,x, \,y) u_0 (y) d y,
\end{equation*}
and $p_t (x, \, y)$ denotes the fractional heat kernel. We will be interested in {\it random field} solutions which require that the mild solution satisfies the following integrability condition 
\begin{equation*}
\sup_{x\in \R^d}\sup_{t>0}\E|u_t(x)|^2<\infty.
\end{equation*}
This will further impose that $\beta\leq \alpha$; see for instance \cite{ferrante}. Existence and uniqueness are well known for the equations being studied here. See \cite{FK} and the references therein.  Here is our first main result for (\ref{main-eq}).
\begin{theorem}\label{colourednoisetheorem}
There exist constants $c$ and $c'$ such that 
\begin{equation*}
\sup_{x \in \R^d} \E |u_t(x)|^2 \leq c \exp \left( c' \lambda^{2 \alpha/ (\alpha - \beta)} t \right)\quad\text{for\,\,all}\quad t>0.
\end{equation*}
And there exists $T > 0$ such that for any $t > T$, 
\begin{equation*}
\inf_{x \in B(0, t^{1/ \alpha})} \E |u_t(x)|^2 \geq \tilde{c} \exp \left( \tilde{c}'\lambda^{2 \alpha/(\alpha - \beta)}t \right), 
\end{equation*}
where $\tilde{c}$ and $\tilde{c}'$ are some positive constants.  This immediately implies that for any fixed $x \in \R^d$,
\begin{equation*}
\tilde{c}' \lambda^{2 \alpha/(\alpha - \beta)} \leq \liminf_{t \rightarrow \infty} \frac{\log \E |u_t(x)|^2 }{t} \leq \limsup_{t \rightarrow \infty} \frac{\log \E |u_t(x)|^2 }{t}\leq c' \lambda^{2 \alpha/(\alpha - \beta)}.
\end{equation*}

\end{theorem}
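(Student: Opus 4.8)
Both inequalities start from Walsh's isometry applied to \eqref{mild-main}, which gives the exact identity
\begin{equation*}
\E|u_t(x)|^2 = |(\sG u)_t(x)|^2 + \lambda^2\int_0^t\!\!\int_{\R^d}\!\!\int_{\R^d} p_{t-s}(x,y)p_{t-s}(x,z)\,\E[\sigma(u_s(y))\sigma(u_s(z))]\,f(y,z)\,\d y\,\d z\,\d s,
\end{equation*}
together with the scaling identity $\sup_x\int\int p_r(x,y)p_r(x,z)|y-z|^{-\beta}\,\d y\,\d z = C_{\alpha,\beta,d}\,r^{-\beta/\alpha}$, the constant being finite because $\beta<d$ (change variables using $p_r(x,\cdot)=r^{-d/\alpha}p_1(x,r^{-1/\alpha}\cdot)$). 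For the \emph{upper bound}, use $\sigma(y)\le L_\sigma|y|$ and Cauchy--Schwarz to get $\E[\sigma(u_s(y))\sigma(u_s(z))]\le L_\sigma^2 H(s)$ with $H(t):=\sup_x\E|u_t(x)|^2$, together with $|(\sG u)_t(x)|\le\|u_0\|_\infty$, yielding the Volterra inequality $H(t)\le\|u_0\|_\infty^2+C L_\sigma^2\lambda^2\int_0^t(t-s)^{-\beta/\alpha}H(s)\,\d s$. A standard renewal/Gronwall lemma (Laplace transform: $\tilde H(z)$ is finite once $C L_\sigma^2\lambda^2\Gamma(1-\beta/\alpha)z^{\beta/\alpha-1}<1$, i.e. for $z$ exceeding a constant multiple of $\lambda^{2\alpha/(\alpha-\beta)}$) then gives $H(t)\le c\exp(c'\lambda^{2\alpha/(\alpha-\beta)}t)$.

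For the \emph{lower bound} I would first reduce to the parabolic Anderson model. Since $u_0\ge0$, the comparison principle for \eqref{main-eq} gives $u_t\ge0$, hence $\E[\sigma(u_s(y))\sigma(u_s(z))]\ge l_\sigma^2\,\E[u_s(y)u_s(z)]$; inserting this into the isometry, $V_t(x,z):=\E[u_t(x)u_t(z)]$ satisfies $V_t\ge \mathrm T(V)_t$, where $\mathrm T$ is the affine, order-preserving operator whose unique fixed point is the second-moment function $M_t(x,z)$ of the linear equation $U_t(x)=(\sG u)_t(x)+\lambda l_\sigma\int_0^t\int p_{t-s}(x,y)U_s(y)F(\d s\,\d y)$. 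Iterating (using $V\ge0$) gives $V_t\ge M_t$, in particular $\E|u_t(x)|^2\ge M_t(x,x)$. Now $M_t$ solves the \emph{closed} two-point equation, so by Feynman--Kac $M_t(x,y)=\E^{(x,y)}\big[u_0(X_t)u_0(Y_t)\exp\big((\lambda l_\sigma)^2\int_0^t|X_s-Y_s|^{-\beta}\,\d s\big)\big]$ with $X,Y$ independent symmetric $\alpha$-stable processes from $x,y$ (replace $f$ by the bounded potential $\epsilon^{-\beta}\1_{\{|x-y|<\epsilon\}}\le f$ first, to sidestep integrability of the exponential).

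Fix $\epsilon>0$ and choose $\delta>0$ and a ball $B_0\subset A$ with $u_0\ge\delta$ on $B_0$ (possible since $\int_A u_0>0$). Restricting the Feynman--Kac expectation to the event that $X$ and $Y$ both stay within $\epsilon/2$ of a common, slowly varying guide path joining the start point $x$ to $B_0$, on which $|X_s-Y_s|\le\epsilon$ throughout, gives $M_t(x,x)\ge\delta^2 e^{(\lambda l_\sigma)^2\epsilon^{-\beta}t}\,\P(\text{event})$. The event probability factorizes into a ``travel'' part and a ``confinement'' part: for $|x|\le t^{1/\alpha}$ the guide path has length $O(t^{1/\alpha})$, so following it (tiled into legs of duration $\epsilon^\alpha$, via the strong Markov property) costs only $e^{-o(t)}$, uniformly in such $x$; and holding an $\alpha$-stable process in an $\epsilon$-ball for time $t$ costs $e^{-c\epsilon^{-\alpha}t}$, the rate being the principal Dirichlet eigenvalue $\asymp\epsilon^{-\alpha}$ of $(-\Delta)^{\alpha/2}$ on $B(0,\epsilon)$. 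Thus $\E|u_t(x)|^2\ge\delta^2\exp\big(t[(\lambda l_\sigma)^2\epsilon^{-\beta}-c\epsilon^{-\alpha}]-o(t)\big)$ uniformly over $x\in B(0,t^{1/\alpha})$. Maximizing over $\epsilon$ (maximizer $\epsilon\asymp\lambda^{-2/(\alpha-\beta)}$) gives $\sup_{\epsilon>0}\{(\lambda l_\sigma)^2\epsilon^{-\beta}-c\epsilon^{-\alpha}\}=\tilde c'\lambda^{2\alpha/(\alpha-\beta)}$, and absorbing the $o(t)$ term produces $\inf_{x\in B(0,t^{1/\alpha})}\E|u_t(x)|^2\ge\tilde c\exp(\tilde c'\lambda^{2\alpha/(\alpha-\beta)}t)$ for $t>T$. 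The displayed $\liminf/\limsup$ inequalities follow immediately, the lower one applied for $t\ge|x|^\alpha$.

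The main obstacle is the confinement estimate: the lower bound $\P(X\ \text{follows the}\ \epsilon\text{-tube})\ge e^{-c\epsilon^{-\alpha}t-o(t)}$, uniformly over starting points in $B(0,t^{1/\alpha})$. This requires sharp two-sided Dirichlet heat-kernel/eigenvalue bounds for $(-\Delta)^{\alpha/2}$ on small balls together with a chaining argument along the guide path, and it is genuinely more delicate than its Brownian counterpart because $\alpha$-stable processes jump --- ``staying in a small ball'' is not a barrier-crossing event and must be controlled spectrally. Secondary points that need care, but are standard, are the domination $V_t\ge M_t$ and the validity of the Feynman--Kac representation for $M_t$ in the colored-noise, $\beta<\alpha$ regime.
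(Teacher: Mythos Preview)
Your upper bound is exactly the paper's argument: Walsh isometry, Cauchy--Schwarz to pull out $H(s)=\sup_x\E|u_s(x)|^2$, the scaling bound $\int\int p_r p_r f\le C r^{-\beta/\alpha}$ (the paper's Lemma~\ref{IntegrlBound}), and then a renewal/Gronwall lemma (the paper's Proposition~\ref{renewalineq}).

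Your lower bound, however, takes a genuinely different route. The paper does \emph{not} invoke comparison, Feynman--Kac, or any spectral/confinement estimate for stable processes. Instead it iterates the second-moment identity directly (Proposition~\ref{colourednoiselowerprop}): writing $\E|u_t(x)|^2\ge|(\sG u)_t(x)|^2+(\lambda l_\sigma)^2\int\!\!\int\!\!\int p\,p\,f\,\E[u_s(y)u_s(z)]$, then feeding the analogous lower bound for $\E[u_s(y)u_s(z)]$ back in, produces a series whose $k$th term is a $k$-fold space--time integral of products of heat kernels and Riesz kernels. The paper then shrinks each spatial integration to a small ball of radius $s_i^{1/\alpha}$ (so that the pointwise heat-kernel lower bound $p_{s_i}\gtrsim s_i^{-d/\alpha}$ applies and the $z_i,z_i'$ stay within $s_1^{1/\alpha}$ of each other, giving $f\ge s_1^{-\beta/\alpha}$), and shrinks the temporal domain to $[0,t/k]^k$. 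This yields $\E|u_t(x)|^2\ge g_t^2\sum_k (c\lambda l_\sigma)^{2k}(t/k)^{k(\alpha-\beta)/\alpha}$, and the elementary Lemma~\ref{sumlowerbd} converts the sum to an exponential. The only input beyond \eqref{estimatep} is the polynomial lower bound $g_t\ge c t^{-d/\alpha}$ of Proposition~\ref{estimatePDEpart}. So the paper's proof is entirely elementary and self-contained: no comparison principle, no Feynman--Kac for coloured noise, no Dirichlet eigenvalue asymptotics.

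Your Feynman--Kac/tube argument is a legitimate alternative and gives a nice physical picture (competition between the potential reward $\epsilon^{-\beta}$ and the confinement cost $\epsilon^{-\alpha}$, optimized at $\epsilon\asymp\lambda^{-2/(\alpha-\beta)}$), but it carries more overhead: you need the Feynman--Kac identity for the two-point function in the coloured, fractional setting, and sharp small-ball confinement bounds for $\alpha$-stable processes. Two points in your sketch deserve tightening. First, your separation into a ``travel'' cost $e^{-o(t)}$ and a ``confinement'' cost $e^{-c\epsilon^{-\alpha}t}$ is mis-stated: tiling the $\epsilon$-tube into legs of duration $\epsilon^\alpha$ already costs a constant per leg and hence $e^{-c\epsilon^{-\alpha}t}$ in total---that \emph{is} the confinement cost. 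The genuinely $e^{-o(t)}$ piece is only the terminal hitting of $B_0$, which for $|x|\le t^{1/\alpha}$ is polynomial in $t$ by the heat-kernel tail. Second, $\int_A u_0>0$ with $u_0$ bounded does not give a ball on which $u_0\ge\delta$, only a set of positive measure; the argument still works with that weaker conclusion, but the sentence as written over-claims.
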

The first part of this theorem says that second moment grows at most exponentially. While this has been known,  the novelty here is that we give a precise rate with respect to the parameter $\lambda$.  The lower bound is completely new. Most of the results of these kinds have been derived from the renewal theoretic ideas developed in \cite{FK} and \cite{FK2}. The methods used in this article are completely different. In particular, we make use of a localisation argument together with heat kernel estimates for the fractional Laplacian. This allows us to get results for finite times as well.  It should be pointed out that since we are dealing with coloured noise here, the proof of exponential growth of the second moment can be quite involved, even if the initial condition is bounded below. In \cite{FK2}, Fourier analytic methods were used. Here, we develop a much more direct approach which involves a renewal-type inequality for a certain quantity. This greatly simplifies the arguments used in \cite{FK2}. See Remark \ref{correlated} below for a brief discussion.

We can also obtain corresponding bounds for higher moments. We do not pursue this here, because right now we do not know how to get any sharp results for higher moments. See \cite{Davar} for details regarding higher moments.

We also note that the quantity $\frac{2\alpha}{\alpha-\beta}$ is important. It gives information on how the rate of growth depends on the operator and the noise term as well. It will become clear that capturing this dependence has motivated our analysis.

The next theorem gives the rate of growth of the second moment with respect to the parameter $\lambda$, which extends results in \cite{foonjose} and \cite{Khoshnevisan:2013ab}.  We note that for time $t$ large enough, this follows from the theorem above. But for small $t$, we need to work a bit harder. 

\begin{theorem}\label{colourednoiselambdacoro}
For any fixed $t>0$ and $x\in \R^d$, we have
\begin{equation*}
\lim_{\lambda \rightarrow \infty}  \frac{ \log \log \E |u_t(x)|^2}{\log \lambda} = \frac{2 \alpha}{\alpha - \beta}.
\end{equation*}
\end{theorem}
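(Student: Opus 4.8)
\emph{Proof proposal.} The upper‑bound half is an immediate consequence of the first estimate in Theorem~\ref{colourednoisetheorem}, with no restriction on $t$: for fixed $t>0$ and $x\in\R^d$ it gives $\log\E|u_t(x)|^2\le \log c+c'\lambda^{2\alpha/(\alpha-\beta)}t$, so
\begin{equation*}
\frac{\log\log\E|u_t(x)|^2}{\log\lambda}\ \le\ \frac{\log\bigl(\log c+c'\,t\,\lambda^{2\alpha/(\alpha-\beta)}\bigr)}{\log\lambda}\ \xrightarrow[\ \lambda\to\infty\ ]{}\ \frac{2\alpha}{\alpha-\beta},
\end{equation*}
hence $\limsup_{\lambda\to\infty}(\log\log\E|u_t(x)|^2)/\log\lambda\le 2\alpha/(\alpha-\beta)$. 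The real content is therefore the matching lower bound, and it suffices to produce, for each fixed $t>0$ and $x\in\R^d$, an estimate of the form $\E|u_t(x)|^2\ge c_1(t,x)\exp\bigl(c_2(t,x)\,\lambda^{2\alpha/(\alpha-\beta)}\bigr)$ with $c_1,c_2>0$; the same computation then yields $\liminf_{\lambda\to\infty}(\log\log\E|u_t(x)|^2)/\log\lambda\ge 2\alpha/(\alpha-\beta)$, and the theorem follows.

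For $t$ large this is already contained in the second estimate of Theorem~\ref{colourednoisetheorem} (once $t>T$ and $t$ is big enough that $x\in B(0,t^{1/\alpha})$). For the remaining, small‑$t$, regime the plan is to rerun the renewal‑type lower bound underlying Theorem~\ref{colourednoisetheorem}, but \emph{localised to a fixed ball} in place of the expanding ball $B(0,t^{1/\alpha})$. Fix $t>0$ and $x$, and choose $R$ so large that $t\le R^\alpha$ and $B(0,R)$ contains both $x$ and a portion of $A$ carrying positive $u_0$‑mass. Working only on $[0,t]\times B(0,R)$, the mild formulation \eqref{mild-main}, Walsh's isometry, the growth bound $\sigma(y)\ge l_\sigma|y|$, and the scaling identity
\begin{equation*}
\int_{\R^d}\int_{\R^d}p_\tau(w,y)\,p_\tau(w,z)\,f(y,z)\,\d y\,\d z\ \asymp\ \tau^{-\beta/\alpha},
\end{equation*}
which persists as a lower bound when $y,z$ are confined to $B(0,R)$ and $\tau\le R^\alpha$ (after a harmless shrinking of the ball), lead to a renewal‑type inequality of the schematic form
\begin{equation*}
\E|u_\tau(w)|^2\ \ge\ h(\tau,w)+c_\star\,\lambda^2\int_0^\tau(\tau-s)^{-\beta/\alpha}\,\Phi(s)\,\d s,\qquad (\tau,w)\in[0,t]\times B(0,R),
\end{equation*}
where $\Phi$ is the localised second‑moment functional propagated through the iteration and $h(\tau,w)=|(\sG u)_\tau(w)|^2$ is bounded below by a strictly positive constant on this compact set, because $u_0\ge0$, $\int_A u_0>0$, and the fractional heat kernel is everywhere strictly positive.

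Iterating this inequality as a lower Picard scheme, using the Beta integral $\int_0^\tau(\tau-s)^{-\beta/\alpha}s^{n\gamma-1}\d s$ at each stage with $\gamma:=1-\beta/\alpha\in(0,1)$, produces a series bounded below by
\begin{equation*}
\E|u_t(x)|^2\ \ge\ \kappa\sum_{n\ge0}\frac{\bigl(c_\star\Gamma(\gamma)\lambda^2 t^\gamma\bigr)^n}{\Gamma(n\gamma+1)}\ =\ \kappa\,E_\gamma\!\bigl(c_\star\Gamma(\gamma)\lambda^2 t^\gamma\bigr),
\end{equation*}
and the classical asymptotics $E_\gamma(z)\sim\gamma^{-1}e^{z^{1/\gamma}}$ as $z\to\infty$ give $\E|u_t(x)|^2\gtrsim\exp\bigl(c_2(t)\lambda^{2/\gamma}\bigr)$ with $c_2(t)=(c_\star\Gamma(\gamma))^{1/\gamma}t$; since $2/\gamma=2\alpha/(\alpha-\beta)$ this is exactly the bound required (that $c_2(t)\to0$ as $t\to0$ is immaterial for the $\log\log/\log\lambda$ limit). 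Together with the upper bound this proves the claimed limit.

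The step I expect to be the main obstacle is the derivation of the localised renewal inequality with the \emph{sharp} kernel exponent $-\beta/\alpha$. Since $\sigma$ need not preserve sign and $u_s$ need not be nonnegative, the two‑point correlation $\E[\sigma(u_s(y))\sigma(u_s(z))]$ cannot simply be bounded below by $l_\sigma^2\,\E[u_s(y)u_s(z)]$ pointwise; the lower estimate must be carried out at the level of the $f$‑weighted double integral (as in the proof of Theorem~\ref{colourednoisetheorem} and as indicated in Remark~\ref{correlated}), preserving the exact scaling $\tau^{-\beta/\alpha}$, since any coarser exponent would corrupt the power of $\lambda$ emerging from the Mittag--Leffler asymptotics. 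Beyond this, verifying that $c_\star$, $\kappa$ and the localisation can be chosen uniformly over $[0,t]\times B(0,R)$ for each fixed $t$ is routine given the heat‑kernel estimates already in use.
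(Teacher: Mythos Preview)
Your overall plan coincides with the paper's. The upper bound is argued identically, and for the lower bound the paper likewise appeals to Proposition~\ref{colourednoiselowerprop}: when $x\in B(0,t^{1/\alpha})$ it applies directly, and when $x\notin B(0,t^{1/\alpha})$ one picks $\kappa>1$ with $x\in B(0,\kappa t^{1/\alpha})$ and reruns the iteration on that larger ball---your ``choose $R$ large'' device in different clothing. The resulting series $\sum_k(c\lambda^2)^k(t/k)^{k(\alpha-\beta)/\alpha}$ is then controlled via Lemma~\ref{sumlowerbd}, which is the combinatorial counterpart of your Mittag--Leffler asymptotics.

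Two corrections are in order. First, the obstacle you flag is not one: the standing hypothesis $l_\sigma|x|\le\sigma(x)$ already forces $\sigma\ge0$, so $\sigma(u_s(y))\sigma(u_s(z))\ge l_\sigma^2|u_s(y)|\,|u_s(z)|$ holds pointwise and hence
\[
\E\bigl[\sigma(u_s(y))\sigma(u_s(z))\bigr]\ \ge\ l_\sigma^2\,\E|u_s(y)u_s(z)|\ \ge\ l_\sigma^2\,\E[u_s(y)u_s(z)],
\]
with no additional argument at the level of the $f$-weighted integral. This is precisely the pointwise inequality the proof of Proposition~\ref{colourednoiselowerprop} uses to set up the two-point recursion.

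Second---and this is the genuine gap in your sketch---the claim that $h(\tau,w)=|(\sG u)_\tau(w)|^2$ is bounded below by a positive constant on $[0,t]\times B(0,R)$ is false. Strict positivity of the heat kernel gives $(\sG u)_\tau(w)>0$ for each fixed $\tau>0$, but as $\tau\downarrow0$ one has $(\sG u)_\tau(w)\to u_0(w)$, which vanishes off the support of $u_0$; there is no uniform lower bound down to $\tau=0$. A closed renewal inequality with a constant source term on all of $[0,t]$ is therefore unavailable. The repair is to work with the explicit $k$-fold iteration (as in Proposition~\ref{colourednoiselowerprop}) rather than a one-step renewal inequality, and to shrink the temporal increments so that the innermost time $s_k$ stays bounded away from zero---for instance, restrict each increment to $[0,t/(2k)]$, which forces $s_k\ge t/2$. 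On the genuinely compact set $[t/2,t]\times B(0,R)$ continuity and strict positivity of the kernel do yield a uniform positive lower bound on $(\sG u)$, and the series estimate then goes through with only a change of constants.
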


A consequence of the proof of the above theorem is that we can complement some results in \cite{Khoshnevisan:2013ab}, where the authors studied the growth rate of the energy of the solution with respect to $\lambda$.  We follow their notations and define the energy $\sE_t(\lambda)$ of the solution $u_t$ as follows,

\begin{equation}
\sE_t(\lambda):=\sqrt{\int_{\R^d}\E|u_t(x)|^2\,\d x}.
\end{equation}

The above quantity does not always exist. But under suitable assumptions on the initial condition, it does. A bounded non-negative initial condition which is compactly supported is one such condition.

The {\it excitation index} of the solution $u_t$ is defined as follows,
\begin{equation*}
e(t):=\lim_{\lambda \rightarrow \infty}\frac{\log \log \sE_t(\lambda)}{\log \lambda}.
\end{equation*}
We then have the following result.
\begin{theorem}\label{excitation-index}
The excitation index $e(t)$ of the solution to \eqref{main-eq}, if it exists, is $2\alpha/(\alpha-\beta)$.
\end{theorem}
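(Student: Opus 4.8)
\emph{Proof strategy.} The plan is to reduce everything to two-sided bounds on the energy and then read off the double-logarithmic asymptotics. Note first that $\sE_t(\lambda)^2=\int_{\R^d}\E|u_t(x)|^2\,\d x=:H_t(\lambda)$. Because the excitation index contains a double logarithm, it suffices to prove that, for each fixed $t>0$, there are constants $c_1(t),\dots,c_4(t)>0$ with
\begin{equation*}
c_1(t)\exp\!\big(c_2(t)\,\lambda^{2\alpha/(\alpha-\beta)}\big)\ \le\ H_t(\lambda)\ \le\ c_3(t)\exp\!\big(c_4(t)\,\lambda^{2\alpha/(\alpha-\beta)}\big)
\end{equation*}
for all $\lambda$ large. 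Indeed, taking $\log\log\sqrt{\,\cdot\,}$ of such bounds yields $\log\log\sE_t(\lambda)=\tfrac{2\alpha}{\alpha-\beta}\log\lambda+O(1)$ as $\lambda\to\infty$, so after dividing by $\log\lambda$ the limit defining $e(t)$, \emph{if it exists}, can only be $2\alpha/(\alpha-\beta)$. Thus the whole argument splits into the two displayed energy estimates.

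For the upper bound I would square the mild equation \eqref{mild-main}, apply the Walsh isometry to write $\E|u_t(x)|^2=|(\sG u)_t(x)|^2+\lambda^2\,(\text{variance of the stochastic integral})$, and integrate in $x$. The deterministic part contributes at most $\|u_0\|_{L^2(\R^d)}^2<\infty$ (finite since $u_0$ is bounded and compactly supported and $\|p_t*u_0\|_{L^2}\le\|u_0\|_{L^2}$). In the noise term I would carry out the $x$-integration first via Chapman--Kolmogorov, $\int_{\R^d}p_{t-s}(x,y)p_{t-s}(x,z)\,\d x=p_{2(t-s)}(y,z)$, then bound $\E[\sigma(u_s(y))\sigma(u_s(z))]\le L_\sigma^2(\E u_s(y)^2)^{1/2}(\E u_s(z)^2)^{1/2}$, use $\sqrt{ab}\le\tfrac12(a+b)$ together with the symmetry of $p_{2(t-s)}(y,z)f(y,z)$, and finally the self-similarity of the $\alpha$-stable kernel, $\int_{\R^d}p_r(0,w)|w|^{-\beta}\,\d w=c\,r^{-\beta/\alpha}$ (finite because $\beta<d$). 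This produces the renewal-type inequality
\begin{equation*}
H_t(\lambda)\ \le\ \|u_0\|_{L^2}^2+c\,L_\sigma^2\,\lambda^2\int_0^t (t-s)^{-\beta/\alpha}\,H_s(\lambda)\,\d s .
\end{equation*}
Since $\beta<\alpha$ the kernel $s^{-\beta/\alpha}$ is locally integrable, so iterating bounds $H_t(\lambda)$ by a Mittag--Leffler function $E_{(\alpha-\beta)/\alpha}$ evaluated at an argument of order $\lambda^2 t^{(\alpha-\beta)/\alpha}$; the classical asymptotic $E_\rho(z)\sim\rho^{-1}e^{z^{1/\rho}}$ then gives exactly $H_t(\lambda)\le c_3(t)\exp(c_4(t)\lambda^{2\alpha/(\alpha-\beta)})$ for large $\lambda$. (In passing this re-establishes that $\sE_t(\lambda)<\infty$ under the stated hypotheses.)

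For the lower bound I would keep only a fixed ball $B\subset\R^d$ lying in a small neighbourhood of the set $A$ on which $\int_A u_0>0$, so that $\sE_t(\lambda)^2\ge|B|\inf_{x\in B}\E|u_t(x)|^2$, and then invoke the pointwise lower bound established in the proof of Theorem~\ref{colournoiselambdacoro}: for fixed $t$ that estimate is obtained by a localisation near the support of $u_0$ and therefore holds uniformly for $x\in B$, giving $\E|u_t(x)|^2\ge c_1(t)\exp(c_2(t)\lambda^{2\alpha/(\alpha-\beta)})$ there. Combined with the reduction in the first paragraph, this finishes the proof. The hard part will be the sharp $\lambda$-exponent in the upper bound: producing \emph{some} exponential bound is routine, but pinning it down to $2\alpha/(\alpha-\beta)$ forces one through the renewal inequality with the correct time-kernel $(t-s)^{-\beta/\alpha}$ and the precise Mittag--Leffler asymptotics, which is exactly where the arithmetic $2\cdot\alpha/(\alpha-\beta)=2\alpha/(\alpha-\beta)$ is used. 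A secondary technical point is to check that the lower bound of Theorem~\ref{colournoiselambdacoro} can be made uniform on a set of positive Lebesgue measure (a single point does not suffice for the spatial integral), and, should the regime $\beta$ arbitrarily close to $\alpha$ be admissible, to absorb the resulting logarithmic borderline in the time integral into a slightly larger $c_4(t)$.
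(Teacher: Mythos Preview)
Your argument is correct, and it arrives at the same two inequalities
\[
\limsup_{\lambda\to\infty}\frac{\log\log\sE_t(\lambda)}{\log\lambda}\le\frac{2\alpha}{\alpha-\beta}
\quad\text{and}\quad
\liminf_{\lambda\to\infty}\frac{\log\log\sE_t(\lambda)}{\log\lambda}\ge\frac{2\alpha}{\alpha-\beta}
\]
that the paper establishes, but the path is genuinely different in both halves.

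\emph{Upper bound.} You derive a closed renewal inequality for the energy $H_t(\lambda)=\sE_t(\lambda)^2$ itself, via Chapman--Kolmogorov, the AM--GM step $\sqrt{ab}\le\tfrac12(a+b)$, and the scaling identity $\int p_r(0,w)|w|^{-\beta}\,\d w=c\,r^{-\beta/\alpha}$, and then invoke Mittag--Leffler asymptotics (which is exactly the content of the paper's Proposition~\ref{renewalineq} and Lemma~\ref{MLestimate}). The paper instead bounds the integrand by $\sup_{y}\E|u_s(y)|^2$ and substitutes the pointwise exponential upper bound already proved in Theorem~\ref{colourednoisetheorem}, so that no renewal argument is repeated. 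Your route is self-contained; the paper's is shorter given the prior result.

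\emph{Lower bound.} You restrict the spatial integral to a fixed ball $B$ and use $\sE_t(\lambda)^2\ge|B|\inf_{x\in B}\E|u_t(x)|^2$, then quote the uniform pointwise lower bound coming from Proposition~\ref{colourednoiselowerprop} (as in the proof of Theorem~\ref{colourednoiselambdacoro}). The paper instead integrates the full iterated lower bound from the proof of Proposition~\ref{colourednoiselowerprop} over $x\in\R^d$ and re-runs the localisation to get
\[
\sE_t(\lambda)^2\ \ge\ c_1+c_2\sum_{k\ge1}(c_3\lambda l_\sigma)^{2k}\Big(\tfrac{t}{k}\Big)^{k(\alpha-\beta)/\alpha},
\]
followed by Lemma~\ref{sumlowerbd}. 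Your reduction is cleaner, but it requires (as you flag) the uniformity of the pointwise bound on a set of positive measure; this is indeed available because the stable heat kernel is strictly positive, so $g_t=\inf_{x\in B(0,\kappa t^{1/\alpha})}(\sG u)_t(x)>0$ for any fixed $t$ and suitable $\kappa$, and Proposition~\ref{colourednoiselowerprop} (or its $\kappa$-rescaled version) then gives the needed uniform bound on $B$.
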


We now give a relationship between the excitation index of \eqref{main-eq} and the continuity property of the solution.

\begin{theorem}\label{continuity}
Let $\eta<(\alpha-\beta)/2\alpha$ then for every $x\in \R^d$, $\{u_t(x),\,t>0\}$, the solution to \eqref{main-eq} has H\"older continuous trajectories with exponent $\eta$. 
\end{theorem}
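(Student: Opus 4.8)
The plan is to deduce H\"older continuity in time from Kolmogorov's continuity criterion applied to the temporal increments of $t\mapsto u_t(x)$. Fix $x\in\R^d$ and a compact interval $[a,b]\subset(0,\infty)$; it suffices to show that for every $p\geq2$ there is a constant $C_p$ (depending on $a,b,x$) with
\[
\E|u_t(x)-u_s(x)|^p\;\leq\;C_p\,|t-s|^{\,p(\alpha-\beta)/(2\alpha)}\qquad\text{for all }s,t\in[a,b].
\]
Indeed, for $p$ large the exponent $p(\alpha-\beta)/(2\alpha)-1$ exceeds any prescribed number, so Kolmogorov's theorem yields a modification of $u_\cdot(x)$ whose trajectories are, on each compact subinterval of $(0,\infty)$, H\"older continuous with any exponent $\eta<(\alpha-\beta)/(2\alpha)$. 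Using the mild formulation \eqref{mild-main} we split $u_t(x)-u_s(x)$ into the difference of the deterministic terms $(\sG u)_t(x)-(\sG u)_s(x)$ and the difference of the two stochastic convolutions, and estimate the $p$-th moment of each separately.

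The deterministic part is easy: since $u_0$ is bounded, $|(\sG u)_t(x)-(\sG u)_s(x)|\leq\|u_0\|_\infty\|p_t-p_s\|_{L^1(\R^d)}$, and writing $p_t-p_s=\int_s^t\partial_\rho p_\rho\,\d\rho$, with $\partial_\rho p_\rho=-(-\Delta)^{\alpha/2}p_\rho$ and the scaling identity $p_\rho(z)=\rho^{-d/\alpha}p_1(\rho^{-1/\alpha}z)$, gives $\|\partial_\rho p_\rho\|_{L^1}\leq C\rho^{-1}$, hence $\|p_t-p_s\|_{L^1}\leq C\log(t/s)\leq C_a|t-s|$ on $[a,b]$. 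This is much stronger than needed.

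For the stochastic convolution, write (for $s<t$)
\begin{multline*}
\lambda\int_0^s\!\!\int_{\R^d}\bigl(p_{t-r}(x,y)-p_{s-r}(x,y)\bigr)\sigma(u_r(y))\,F(\d r\,\d y)\\
+\;\lambda\int_s^t\!\!\int_{\R^d}p_{t-r}(x,y)\sigma(u_r(y))\,F(\d r\,\d y),
\end{multline*}
the two stochastic integrals being orthogonal as they run over disjoint time intervals. Applying the Burkholder--Davis--Gundy inequality to each, then Minkowski's inequality and Cauchy--Schwarz to handle the random coefficient (using that the Riesz-kernel bilinear form satisfies $\langle g,g\rangle_f\leq\langle|g|,|g|\rangle_f$ because $f\geq0$), and invoking the linear growth $\sigma(z)\leq L_\sigma|z|$ together with the a priori bound $\sup_{0\leq r\leq b}\sup_{y\in\R^d}\E|u_r(y)|^p<\infty$ (a consequence of the fixed-point argument behind existence and uniqueness), one bounds the $p$-th moment of the stochastic increment by $C_p$ times the $(p/2)$-th power of
\begin{multline*}
\mathcal J(t,s):=\int_0^s\!\!\int_{\R^d}\!\!\int_{\R^d}\bigl|p_{t-r}(x,y)-p_{s-r}(x,y)\bigr|\,\bigl|p_{t-r}(x,y')-p_{s-r}(x,y')\bigr|f(y,y')\,\d y\,\d y'\,\d r\\
+\int_s^t\!\!\int_{\R^d}\!\!\int_{\R^d}p_{t-r}(x,y)p_{t-r}(x,y')f(y,y')\,\d y\,\d y'\,\d r.
\end{multline*}
Thus the whole theorem reduces to the estimate $\mathcal J(t,s)\leq C|t-s|^{(\alpha-\beta)/\alpha}$.

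The second integral in $\mathcal J(t,s)$ is handled by Plancherel's theorem: the Fourier transform of the Riesz kernel $f$ is a constant multiple of $|\xi|^{\beta-d}$ and $\widehat{p_r}(\xi)=e^{-r|\xi|^{\alpha}}$, so that integral equals
\[
c\int_0^{t-s}\int_{\R^d}e^{-2\rho|\xi|^{\alpha}}|\xi|^{\beta-d}\,\d\xi\,\d\rho,
\]
and computing the $\xi$-integral in polar coordinates gives $c'\rho^{-\beta/\alpha}$ (finite since $\beta>0$), whence $c''\int_0^{t-s}\rho^{-\beta/\alpha}\,\d\rho=c'''|t-s|^{(\alpha-\beta)/\alpha}$, finite since $\beta<\alpha$. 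For the first integral one must exploit the cancellation inside $p_{t-r}-p_{s-r}$: writing $p_{t-r}(x,y)-p_{s-r}(x,y)=\int_{s-r}^{t-r}\partial_\rho p_\rho(x,y)\,\d\rho$ and inserting the gradient-in-time estimate $|\partial_\rho p_\rho(x,y)|\leq C\rho^{-1}\bar p_\rho(x,y)$ for the fractional heat kernel (with $\bar p_\rho$ an integrable majorant of the same scaling as $p_\rho$) reduces the term to Riesz-potential integrals which, again using $\beta<\alpha$, once more produce the power $|t-s|^{(\alpha-\beta)/\alpha}$. Obtaining this last bound is the main obstacle: the crude estimate $|p_{t-r}-p_{s-r}|\leq p_{t-r}+p_{s-r}$ destroys all the $|t-s|$ decay, so the argument has to be built around the time derivative of the (less regular than Gaussian) fractional heat kernel, for which sharp kernel estimates are needed. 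Everything else --- the Burkholder--Davis--Gundy reduction, the a priori moment bounds, the deterministic estimate, and the final appeal to Kolmogorov's criterion --- is routine. Combining the two contributions yields the displayed moment bound, and hence the theorem.
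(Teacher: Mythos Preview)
Your overall architecture---Kolmogorov's criterion, the three-term decomposition of $u_{t}(x)-u_s(x)$, BDG plus Minkowski to reduce to a deterministic space--time integral, and the a priori moment bounds---is exactly the paper's. The one substantive divergence is in how you bound the stochastic integral over $[0,s]$.

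You pass first to $\langle |g|,|g|\rangle_f$ (with $g(y)=p_{t-r}(x,y)-p_{s-r}(x,y)$) and then propose to recover the $|t-s|$ gain via the pointwise time-derivative bound $|\partial_\rho p_\rho|\le C\rho^{-1}\bar p_\rho$. This is in principle workable, but you yourself flag it as ``the main obstacle'' and do not actually carry out the resulting double-$\rho$ Riesz-potential computation; that step is not routine and, as written, is a gap. The paper avoids this entirely: it keeps the signed $g$, applies Plancherel (the same move you already use for the $[s,t]$ piece), and bounds
\[
\int_0^{s}\!\!\int_{\R^d}\bigl|\hat p_{t-r}(\xi)-\hat p_{s-r}(\xi)\bigr|^2|\xi|^{\beta-d}\,\d\xi\,\d r
=\int_0^{s}\!\!\int_{\R^d}e^{-2(s-r)|\xi|^\alpha}\bigl|e^{-h|\xi|^\alpha}-1\bigr|^2|\xi|^{\beta-d}\,\d\xi\,\d r
\]
via the elementary inequality $|e^{-h|\xi|^\alpha}-1|\le h^q|\xi|^{\alpha q}$ for any $q\in(0,1)$; the $\xi$-integral is then finite precisely when $q<(\alpha-\beta)/(2\alpha)$ (this is the paper's Proposition~\ref{increment}). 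The upshot is $\E|u_{t}(x)-u_s(x)|^p\le C_p|t-s|^{pq}$ for every $q<(\alpha-\beta)/(2\alpha)$, which is all Kolmogorov needs.

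In short: your bound $\mathcal J(t,s)\le C|t-s|^{(\alpha-\beta)/\alpha}$ is even sharper than the paper's (it hits the endpoint), but the route you sketch to get there is harder and left incomplete. Since you already invoke Plancherel for the second term of $\mathcal J$, simply apply it to the first term as well and use $|e^{-x}-1|\le x^q$; this sidesteps the kernel-derivative estimate altogether.
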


As such this result can be read off from \cite{BEM2010a} but for the sake of completeness we will give a quick proof.  We include this theorem to make the point that $\eta\leq \frac{1}{e(t)}$, hence showcasing a link between noise excitability and continuity of the solution.  We also have deliberately avoided the point $t=0$ in the statement of the above result. Including this would require some continuity assumption on the initial data in order for the conclusion to remain valid.
The choice of Riesz kernel as correlation function for our noise has been motivated by our desire to calculate the noise excitability.  It will be clear to the reader that exponential growth of the second moment can be proved for a much larger class of correlation functions.

All of our results hold for white noise driven equations as well. We offer the following theorem which emphasises the fact that we have indeed answered the question posed in \cite{CoKh}. We do not keep track of the constants appearing in the proofs of the above results, so we will give a self contained proof of the theorem below. 

\begin{theorem}\label{theoremwhite}
Let $u_t$ denote the unique solution to the following stochastic heat equation,
\begin{equation*}
\frac{\partial u_t(x)}{\partial t} = -(-\Delta)^{\alpha/2} u_t(x) + \lambda \sigma (u_t(x)) \dot w(t,x) \quad \text{for} ~ x \in \R ~\text{and} ~ t > 0, 
\end{equation*}
with conditions described in the introduction. Then, there exists a $T>0$ such that for $t>T$, we have 
\begin{equation*}
\inf_{x \in B(0, t^{1/ \alpha})} \E |u_t(x)|^2 \geq c \exp \left( c' \lambda^{2 \alpha/(\alpha - 1)}t \right), 
\end{equation*}
where $c$ and $c'$ are some positive constants. This immediately yields
\begin{equation*}
\liminf_{t\rightarrow \infty}\frac{1}{t}\log \E |u_t(x)|^2 \geq c' \lambda^{2 \alpha/(\alpha - 1)},
\end{equation*}
for any fixed $x\in \R$.
\end{theorem}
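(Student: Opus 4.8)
The plan is to adapt the lower-bound part of Theorem~\ref{colourednoisetheorem} to the white-noise setting (which formally plays the role of the Riesz parameter $\beta=1$ in dimension one, whence the exponent $2\alpha/(\alpha-1)$), carrying out the estimates explicitly since constants were not tracked there. Throughout, write $p_r(x,y)=p_r(x-y)$ for the fractional heat kernel and recall the two-sided bound $p_r(z)\asymp\min\{r^{-1/\alpha},\,r\,|z|^{-1-\alpha}\}$; in particular $p_r(z)\ge c_0 r^{-1/\alpha}$ when $|z|\le r^{1/\alpha}$, and $\int_{\R}p_r(z)^2\,\d z=c_1 r^{-1/\alpha}$, so that $\int_0^t\!\int_{\R}p_{t-s}(x,y)^2\,\d y\,\d s=c_1(1-1/\alpha)^{-1}t^{\,1-1/\alpha}<\infty$ precisely because $\alpha>1$ --- the condition making the white-noise equation well posed in $d=1$.

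First I would square the mild form of the equation, use Walsh's isometry (the deterministic term $(\sG u)_t(x)$ and the stochastic integral being orthogonal) together with the random-field bound $\sup_{s,y}\E|u_s(y)|^2<\infty$, and then the hypothesis $\sigma(r)^2\ge l_\sigma^2 r^2$, to get, after discarding the non-negative term $|(\sG u)_t(x)|^2$,
\begin{equation}\label{eq:wbasic}
\E|u_t(x)|^2\ \ge\ \lambda^2 l_\sigma^2\int_0^t\!\!\int_{\R}p_{t-s}(x,y)^2\,\E|u_s(y)|^2\,\d y\,\d s .
\end{equation}
Next I would prove a seeding estimate: after replacing $A$ by $A\cap B(0,M)$ for a suitable $M$ (so that $\int_A u_0>0$ with $A$ bounded), strict positivity and joint continuity of $p_r$, hence a positive lower bound on compacts, give, for every $R_0>0$, some $t_0>0$ and $c_0=c_0(R_0)>0$ with $\E|u_s(y)|^2\ge|(\sG u)_s(y)|^2\ge c_0$ for all $s\in[t_0,2t_0]$ and $|y|\le R_0$.

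The core is then a renewal-type inequality for
\begin{equation*}
F(t):=\inf_{|x|\le c\,t^{1/\alpha}}\E|u_t(x)|^2 ,
\end{equation*}
where $c\ge1$ is a fixed constant. The geometric input is the subadditivity $t^{1/\alpha}\le s^{1/\alpha}+(t-s)^{1/\alpha}$, valid for $0<s<t$ exactly because $1/\alpha\le1$: it lets one choose, for each $s$ and each $x$ with $|x|\le c\,t^{1/\alpha}$, a point $z_s$ on the segment $[0,x]$ and a radius $\rho_s>0$ with $B(z_s,\rho_s)\subseteq B\bigl(x,(t-s)^{1/\alpha}\bigr)\cap B\bigl(0,c\,s^{1/\alpha}\bigr)$. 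On that ball $p_{t-s}(x,y)\ge c_0(t-s)^{-1/\alpha}$ and $\E|u_s(y)|^2\ge F(s)$, so \eqref{eq:wbasic} gives, after bounding $|B(z_s,\rho_s)|$ below,
\begin{equation*}
F(t)\ \ge\ \kappa\,\lambda^2\int_0^t (t-s)^{-1/\alpha}F(s)\,\d s
\end{equation*}
for $t$ large, with $\kappa>0$ absolute. Feeding in the positive lower bound for $F$ on $[t_0,2t_0]$ from the seeding step (take $R_0\ge c(2t_0)^{1/\alpha}$) and comparing with the renewal equation $h(t)=1+\kappa\lambda^2\int_0^t(t-s)^{-1/\alpha}h(s)\,\d s$ yields $F(t)\ge\tilde c\exp\bigl(\tilde c'\lambda^{2\alpha/(\alpha-1)}t\bigr)$: the exponent is forced because an exponential ansatz $h(t)\asymp e^{\gamma t}$ requires $\gamma^{\,1-1/\alpha}=\kappa\lambda^2\,\Gamma(1-1/\alpha)$, i.e.\ $\gamma\asymp\lambda^{2\alpha/(\alpha-1)}$ (equivalently, iterating \eqref{eq:wbasic} produces the Mittag--Leffler type series $\sum_{n\ge1}(c\lambda^2 t^{1-1/\alpha})^n/\Gamma\bigl((1-1/\alpha)n+1\bigr)$, whose growth rate in $t$ is $\exp(c'\lambda^{2\alpha/(\alpha-1)}t)$). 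Since $B(0,t^{1/\alpha})\subseteq B(0,c\,t^{1/\alpha})$, this is the asserted bound; and as any fixed $x$ lies in $B(0,t^{1/\alpha})$ for all large $t$, it also gives $\liminf_{t\to\infty}t^{-1}\log\E|u_t(x)|^2\ge c'\lambda^{2\alpha/(\alpha-1)}$.

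I expect the main obstacle to be this last localisation: producing a single ball $B(z_s,\rho_s)$ that simultaneously lies where $p_{t-s}(x,\cdot)\asymp(t-s)^{-1/\alpha}$ and where $\E|u_s(\cdot)|^2\ge F(s)$, while keeping $\rho_s$ large enough that $\int_{B(z_s,\rho_s)}p_{t-s}(x,y)^2\,\d y$ remains of order $(t-s)^{-1/\alpha}$ --- the quantity that drives the renewal inequality. This is where the subadditivity of $r\mapsto r^{1/\alpha}$ (hence $\alpha\ge1$) and the sharp heat-kernel bounds are indispensable, and one must be slightly careful for $s\approx0$ and $s\approx t$; the losses there are harmless because they only degrade the renewal kernel against a function that is small near $s=0$ anyway.
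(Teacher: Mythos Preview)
Your strategy is genuinely different from the paper's. The paper does \emph{not} attempt a closed one-step renewal inequality for $F(t)=\inf_{|x|\le c t^{1/\alpha}}\E|u_t(x)|^2$. Instead it iterates the mild equation completely, obtaining
\[
\E|u_t(x)|^2\ \ge\ |(\sG u)_t(x)|^2+\sum_{k\ge1}(\lambda l_\sigma)^{2k}\int\!\!\cdots\!\!\int |(\sG u)_{s_k}(z_k)|^2\prod_{i=1}^k p^2_{s_{i-1}-s_i}(z_{i-1},z_i)\,\d z\,\d s,
\]
and only then restricts the spatial variables, keeping \emph{all} $z_i$ in the fixed ball $B(0,t^{1/\alpha})$ (not in $B(0,s_i^{1/\alpha})$). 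After shrinking each time increment to $[0,t/k]$ and changing variables, the intersections $B(z_{i-1},s_i^{1/\alpha})\cap B(0,t^{1/\alpha})$ always have measure $\asymp s_i^{1/\alpha}$, and one sums the resulting series $\sum_k c^k(t/k)^{k(\alpha-1)/\alpha}$ directly via Lemma~\ref{sumlowerbd}.

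The obstacle you identify in your last paragraph is real, and your dismissal of it is not justified. For $|x|$ near $t^{1/\alpha}$ the intersection $B(x,(t-s)^{1/\alpha})\cap B(0,s^{1/\alpha})$ has length exactly
\[
s^{1/\alpha}+(t-s)^{1/\alpha}-t^{1/\alpha},
\]
which is of order $\min\{s^{1/\alpha},(t-s)^{1/\alpha}\}$, \emph{not} $(t-s)^{1/\alpha}$; so on the whole range $s<t/2$ (not merely ``near $s=0$'') your kernel degrades from $(t-s)^{-1/\alpha}$ to $s^{1/\alpha}(t-s)^{-2/\alpha}$. The claim that this is harmless ``because $F$ is small near $s=0$'' is circular: you have no a priori control on $F(s)$ there, and in any case the degradation persists over half the interval. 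Consequently the clean inequality $F(t)\ge\kappa\lambda^2\int_0^t(t-s)^{-1/\alpha}F(s)\,\d s$ is not established, and the comparison with the Mittag--Leffler solution does not go through as written. The paper's device --- unfolding first, then confining every $z_i$ to the fixed large ball $B(0,t^{1/\alpha})$ rather than the shrinking $B(0,s^{1/\alpha})$ --- is precisely what circumvents this difficulty; your single-step scheme forces the shrinking ball and hence the loss.
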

While this paper was in the final stage of preparation, we were informed by Le Chen of \cite{Chen-Dalang}, where the authors have also answered the open problem of \cite{CoKh} mentioned above. In fact, they consider a class of equations which is a bit wider than the one mentioned in the above theorem in that the operator involved is more general. The methods which they employed is also very different and involve some kind of formula for the moments of the solution. See \cite{Chen-Dalang} for more details.  Here, our method is softer and can be applied to equations involving colored noise as well.

We now describe a fundamental strategy upon which our methods rely. We restrict to the situation described in the above theorem. We know from Walsh isometry that the second moment of the solution satisfies 
\begin{equation*}
\E|u_t(x)|^2=(\sG u)_t(x)^2+\lambda^2\int_0^t\int_{\R}p^2_{t-s}(x-y)\E|\sigma(u_s(y))|^2\,\d y\,\d s.
\end{equation*}
The idea is to show that the second term essentially contributes to the exponential growth of the second moment, provided that the first term does not decay too fast with time. When the initial condition $u_0$ is bounded below, we immediately have the desired exponential growth since the first term is always bounded below.  But when $u_0$ have only positive support, the first term decays but only polynomially fast; this is Proposition \ref{estimatePDEpart}. And as time gets large, the "exponential growth" induced by the second term makes the second moment of the solution to start growing exponentially fast.  In some sense, this is an interplay between the "dissipative" effect of the fractional Laplacian and the noise term which is pumping energy to the system.  Proposition \ref{whitenoiselowbd} captures this interplay for the white noise driven equation and Proposition \ref{colourednoiselowerprop} for the coloured noise driven equation.
Finally, we mention that the situation is entirely different if one deals with equations on bounded domains with say the Dirichlet boundary conditions. In this case, there is no analogue of Proposition \ref{estimatePDEpart} and whether there is exponential growth of the second moment is highly dependent on the value of $\lambda$. A forthcoming paper \cite{FoonNual} will address this question.

We end this introduction with a plan of the article.  In the section 2, we give some estimates which will be needed for the proofs of the main results. In section 3, we prove the main result concerning the white noise drive equation. In section 4, we provide proofs of the main results for the coloured noise driven equation. Finally in section 5, we indicate some possible extensions.  The letter $c$ with or without superscripts or subscripts will denote constants whose exact values are not important and might vary from place to place.
\section{Preliminaries}

We begin this section with some information about the heat kernel of stable processes. Let $p_t(x, \, y)$ be the transition density for the $\alpha$-stable process on $\R^d$. 
We have the following bounds.

\begin{equation}
\label{estimatep}
c_1\left(t^{-d/\alpha}\wedge \frac{t}{|x-y|^{d+\alpha}}\right)\leq p_t(x,\,y)\leq c_2 \left(t^{-d/\alpha}\wedge \frac{t}{|x-y|^{d+\alpha}}\right),
\end{equation}
where $c_1$ and $c_2$ are positive constants.  Recall that 
\begin{equation*}
(\sG u)_t(x):=\int_{\R^d}p_t(x,\,y)u_0(y)\,\d y,
\end{equation*}
we then have the following proposition. This result may not be original, but, to our best knowledge, no such an estimate can be found in the existing literature. 
\begin{proposition} \label{estimatePDEpart}
There exists a $T>0$ and a constant $c_1$ such that for all $t>T$ and all $x\in B(0,\,t^{1/\alpha})$, we have
\begin{equation*}
(\sG u)_t(x)\geq \frac{c_1}{t^{d/\alpha}}.
\end{equation*}
\end{proposition}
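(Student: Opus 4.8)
The plan is to exploit the lower bound in the heat kernel estimate \eqref{estimatep} together with the fact that the initial mass $\int_A u_0(x)\,\d x$ is strictly positive. The first step is a harmless reduction: since $u_0$ is nonnegative and $\int_A u_0(x)\,\d x > 0$, dominated convergence gives $\int_{A\cap B(0,R)} u_0(x)\,\d x > 0$ for all $R$ large enough, so we may replace $A$ by $A\cap B(0,R)$ and assume from the outset that $A\subset B(0,R)$ for some fixed radius $R$.

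Next I would estimate $p_t(x,y)$ from below uniformly over $x\in B(0,t^{1/\alpha})$ and $y\in A$. For $t\geq R^\alpha$ we have $R\leq t^{1/\alpha}$, hence $|x-y|\leq |x|+|y|\leq 2t^{1/\alpha}$ for such $x,y$. Plugging this into \eqref{estimatep},
\begin{equation*}
\frac{t}{|x-y|^{d+\alpha}} \geq \frac{t}{(2t^{1/\alpha})^{d+\alpha}} = 2^{-(d+\alpha)}\, t^{1-(d+\alpha)/\alpha} = 2^{-(d+\alpha)}\, t^{-d/\alpha},
\end{equation*}
so that the minimum in \eqref{estimatep} satisfies $t^{-d/\alpha}\wedge \frac{t}{|x-y|^{d+\alpha}} \geq 2^{-(d+\alpha)} t^{-d/\alpha}$, and therefore $p_t(x,y)\geq c_1 2^{-(d+\alpha)} t^{-d/\alpha}$ for all $x\in B(0,t^{1/\alpha})$, $y\in A$, and $t\geq R^\alpha$.

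Finally, I would integrate this bound against the initial data: using nonnegativity of $u_0$ and of the kernel,
\begin{equation*}
(\sG u)_t(x) = \int_{\R^d} p_t(x,y) u_0(y)\,\d y \geq \int_A p_t(x,y) u_0(y)\,\d y \geq c_1 2^{-(d+\alpha)} t^{-d/\alpha} \int_A u_0(y)\,\d y,
\end{equation*}
which is the claimed inequality with $T=R^\alpha$ and the new constant $c_1 2^{-(d+\alpha)}\int_A u_0(y)\,\d y > 0$. There is no serious obstacle here; the only point requiring a little care is identifying which branch of the minimum in \eqref{estimatep} is active on the relevant range of $|x-y|$ — the choice of the ball $B(0,t^{1/\alpha})$ (rather than a larger ball) is precisely what keeps $|x-y|$ comparable to $t^{1/\alpha}$, so that the "off-diagonal" term $t/|x-y|^{d+\alpha}$ is itself of order $t^{-d/\alpha}$ rather than smaller.
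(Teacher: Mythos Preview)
Your proof is correct and takes essentially the same approach as the paper: localise $A$ to a bounded set, use the lower heat-kernel estimate \eqref{estimatep} on the range $|x-y|\lesssim t^{1/\alpha}$, and integrate against $u_0$ over $A$. The only cosmetic difference is that the paper restricts the integration domain to $B(x,t^{1/\alpha}/2)\cap B(0,t^{1/\alpha})$ before invoking the kernel bound, whereas you bound $|x-y|\leq 2t^{1/\alpha}$ directly on $A$; your version is arguably cleaner, as it makes explicit why the off-diagonal branch of \eqref{estimatep} still delivers order $t^{-d/\alpha}$ uniformly over $x\in B(0,t^{1/\alpha})$.
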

\begin{proof}
We begin by writing 
\begin{equation*}
\begin{aligned}
(\sG u)_t(x) &\geq \int_{B(x,\,t^{1/\alpha})}p_t(x,\,y)u_0(y)\,\d y\\
&\geq \frac{c_2}{t^{d/\alpha}}\int_{B(x,\,t^{1/\alpha}/2)\cap B(0,\,t^{1/\alpha})}u_0(y)\,\d y.
\end{aligned}
\end{equation*}
Since $x\in B(0,\,t^{1/\alpha})$, we can use our assumption on $u_0$ to find $T>0$ large enough so that for $t>T$, we have
\begin{equation*}
\int_{B(x,\,t^{1/\alpha}/2)\cap B(0,\,t^{1/\alpha})\cap A }u_0(y)\,\d y\geq c_3.
\end{equation*}
This proves the result.
\end{proof}
We have the following estimate which will be useful for establishing temporal continuity property of the solution.  Recall the Fourier transform of the heat kernel that

\begin{equation*}
\hat{p}_t(\xi):=\E e^{i\xi \cdot X_t}=e^{-t|\xi|^\alpha}.
\end{equation*}

\begin{proposition}\label{increment}
Let $q\in(0,\,\frac{\alpha-\beta}{2\alpha})$ and $h\in(0,\,1)$, we then have 
\begin{equation*}
\int_0^{t}\int_{\R^d}|\hat{p}_{t-s+h}(\xi)-\hat{p}_{t-s}(\xi)|^2\frac{1}{|\xi|^{d-\beta}}\,\d \xi \,\d s\leq c_1h^{2q},
\end{equation*}
for some constant $c_1$.
\end{proposition}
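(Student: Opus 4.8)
The plan is to compute the $\xi$-integral first and then integrate in $s$, exploiting the explicit form $\hat p_t(\xi)=e^{-t|\xi|^\alpha}$. Write $r=t-s$, so that as $s$ runs over $(0,t)$, $r$ runs over $(0,t)$ as well, and the quantity to bound becomes
\begin{equation*}
I(h):=\int_0^{t}\int_{\R^d}e^{-2r|\xi|^\alpha}\bigl(e^{-h|\xi|^\alpha}-1\bigr)^2\frac{\d\xi}{|\xi|^{d-\beta}}\,\d r.
\end{equation*}
The first step is to pass to polar coordinates in $\xi$ and substitute $\rho=|\xi|^\alpha$; since the angular integration contributes a finite constant and the Jacobian turns $|\xi|^{\beta-1}\,\d|\xi|$ into a constant times $\rho^{\beta/\alpha-1}\,\d\rho$, we reduce to a one-dimensional integral
\begin{equation*}
I(h)\leq c\int_0^{t}\int_0^\infty e^{-2r\rho}\bigl(e^{-h\rho}-1\bigr)^2\rho^{\beta/\alpha-1}\,\d\rho\,\d r.
\end{equation*}

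The second step is to carry out the $r$-integral, which gives $\int_0^t e^{-2r\rho}\,\d r\le \frac{1}{2\rho}$. Hence
\begin{equation*}
I(h)\leq c\int_0^\infty \bigl(e^{-h\rho}-1\bigr)^2\rho^{\beta/\alpha-2}\,\d\rho.
\end{equation*}
Now substitute $\rho = u/h$, so $\d\rho = \d u/h$ and $\rho^{\beta/\alpha-2} = h^{2-\beta/\alpha}u^{\beta/\alpha-2}$; the power of $h$ that comes out is $h^{2-\beta/\alpha-1}=h^{1-\beta/\alpha}$, and this already has the right shape since $1-\beta/\alpha=(\alpha-\beta)/\alpha$ and $2q<(\alpha-\beta)/\alpha$, so $h^{1-\beta/\alpha}\le h^{2q}$ for $h\in(0,1)$ once we know the remaining integral is a finite constant. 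That remaining integral is
\begin{equation*}
J:=\int_0^\infty (e^{-u}-1)^2 u^{\beta/\alpha-2}\,\d u,
\end{equation*}
and the third step is just to check convergence at both ends: near $u=0$ we have $(e^{-u}-1)^2\sim u^2$, so the integrand behaves like $u^{\beta/\alpha}$, integrable since $\beta/\alpha>-1$; near $u=\infty$ the factor $(e^{-u}-1)^2\to 1$ and $u^{\beta/\alpha-2}$ is integrable since $\beta/\alpha-2<-1$, i.e. $\beta<\alpha$, which holds. Thus $J<\infty$ and $I(h)\le cJ\,h^{1-\beta/\alpha}\le c_1 h^{2q}$.

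The only genuinely delicate point is the exponent bookkeeping, so let me flag where sharpness actually enters. The pointwise inequality $(e^{-h\rho}-1)^2\le \min\{1,(h\rho)^{2}\}$ (and more generally $\le (h\rho)^{2q'}$ for any $q'\in[0,1]$ when $h\rho\le 1$) is what lets one trade regularity for the power of $h$; the honest statement is that the integral produces exactly $h^{(\alpha-\beta)/\alpha}$, which is the borderline Hölder exponent, and the hypothesis $q<(\alpha-\beta)/(2\alpha)$ is precisely what gives the slack $h^{(\alpha-\beta)/\alpha}\le h^{2q}$ on $(0,1)$. One should also note that the bound is uniform in $t$: after the $r$-integration the dependence on $t$ has disappeared (we bounded $\int_0^t$ by $\int_0^\infty$), so the constant $c_1$ does not depend on $t$, which is what is wanted for the continuity application. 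An alternative to the substitution $\rho=u/h$ is to split the $\rho$-integral at $\rho=1/h$ and use $(e^{-h\rho}-1)^2\le h^2\rho^2$ on the lower piece and $(e^{-h\rho}-1)^2\le 1$ on the upper piece; both pieces then integrate to constants times $h^{1-\beta/\alpha}$, giving the same conclusion without any change of variables.
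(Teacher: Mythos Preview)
Your argument is correct. It differs from the paper's in the order of operations and in how the power of $h$ is extracted. The paper pulls out $h^{2q}$ at the outset via the pointwise bound $|e^{-h|\xi|^\alpha}-1|\le h^q|\xi|^{\alpha q}$, and is then left with the integral
\[
\int_0^t\int_{\R^d}\frac{e^{-2(t-s)|\xi|^\alpha}|\xi|^{2\alpha q}}{|\xi|^{d-\beta}}\,\d\xi\,\d s,
\]
which it splits at $|\xi|=1$; the condition $q<(\alpha-\beta)/(2\alpha)$ enters as the convergence criterion for the $|\xi|\ge 1$ piece, while the $|\xi|<1$ piece is bounded by a constant that actually grows with $t$. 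You instead integrate out the time variable first (the same $\int_0^t e^{-2r\rho}\,\d r\le 1/(2\rho)$ trick), then rescale $\rho\mapsto u/h$ to extract the sharp exponent $h^{(\alpha-\beta)/\alpha}$, checking that the remaining integral $J$ is a finite absolute constant; the hypothesis on $q$ is used only at the very end as the slack $h^{(\alpha-\beta)/\alpha}\le h^{2q}$. What your route buys is a constant $c_1$ independent of $t$ and the critical exponent $(\alpha-\beta)/\alpha$ explicitly; what the paper's route buys is that the interpolation $|1-e^{-x}|\le x^q$ is a one-line device that avoids any change of variables.
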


\begin{proof}
From the Fourier transform of the heat kernel,  
\begin{equation*}
|\hat{p}_{t+h-s}(\xi)-\hat{p}_{t-s}(\xi)|^2=e^{-2(t-s)|\xi|^\alpha}[e^{-h|\xi|^\alpha}-1]^2.
\end{equation*}
We have 
\begin{equation*}
\begin{aligned}
\int_0^{t}\int_{\R^d}&|\hat{p}_{t+h-s}(\xi)-\hat{p}_{t-s}(\xi)|^2\frac{1}{|\xi|^{d-\beta}}\d \xi \d s\\
&=\int_0^t\int_{\R^d}\frac{e^{-2(t-s)|\xi|^\alpha}[e^{-h|\xi|^\alpha}-1]^2}{|\xi|^{d-\beta}}\,\d \xi \,\d s.
\end{aligned}
\end{equation*}
We use the following observation $|e^{-h|\xi|^\alpha}-1|\leq h^q|\xi|^{\alpha q}$ to bound the above term by 
\begin{equation*}
\begin{aligned}
h^{2q}\int_0^t\int_{\R^d}\frac{e^{-2(t-s)|\xi|^\alpha}|\xi|^{2\alpha q}}{|\xi|^{d-\beta}}\,\d \xi \,\d s.
\end{aligned}
\end{equation*}
We now separate the integral into two parts
\begin{equation*}
\begin{aligned}
\int_0^t\int_{\R^d}&\frac{e^{-2(t-s)|\xi|^\alpha}|\xi|^{2\alpha q}}{|\xi|^{d-\beta}}\,\d \xi \,\d s\\
&=\int_0^t\int_{|\xi|<1}\frac{e^{-2(t-s)|\xi|^\alpha}|\xi|^{2\alpha q}}{|\xi|^{d-\beta}}\,\d \xi \,\d s\\
&+\int_0^t\int_{|\xi|\geq 1}\frac{e^{-2(t-s)|\xi|^\alpha}|\xi|^{2\alpha q}}{|\xi|^{d-\beta}}\,\d \xi \,\d s.
\end{aligned}
\end{equation*}
The first integral appearing on the right hand side of the above display is clearly bounded. We need a bit more work for the second integral.
\begin{equation*}
\begin{aligned}
\int_0^t\int_{|\xi|\geq 1}&\frac{e^{-2(t-s)|\xi|^\alpha}|\xi|^{2\alpha q}}{|\xi|^{d-\beta}}\,\d \xi \,\d s\\
&\leq \int_{|\xi|\geq 1}\frac{1}{|\xi|^{d+\alpha-\beta-2\alpha q}}\d \xi.\\  
\end{aligned}
\end{equation*}
Since we are assuming that $q\leq \frac{\alpha-\beta}{2\alpha}$, the above integral is finite.  We now combine all the above estimates to obtain the result.
\end{proof}

In what follows, we will need the Gamma function which will be denoted $\Gamma(\cdot)$.  As usual $\Z_+$  and $\N$ denote the set of all positive integers and the set of all nonnegative integers respectively.  The next lemma is proved in \cite{Khoshnevisan:2013aa}, but we give a slightly different proof here.

\begin{lemma} \label{sumlowerbd}
Let $0 < \rho \leq 1$, then there exists a positive constant $c_1$ such that for all $b \geq (e/\rho)^{\rho}$,
\begin{equation*}
\sum_{j=0}^{\infty} \left( \frac{b}{j^{\rho}} \right)^j  \geq  \exp \left(c_1 b^{1/\rho}\right).
\end{equation*}  
\end{lemma}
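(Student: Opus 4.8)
The plan is to bound the infinite series from below by a single, optimally chosen term. For $j\geq 1$ write $a_j:=(b/j^\rho)^j$, so that $\log a_j=j(\log b-\rho\log j)$. Treating $j$ as a continuous variable and differentiating, $\tfrac{d}{dj}\log a_j=\log b-\rho\log j-\rho$, which vanishes at $j^\ast=b^{1/\rho}/e$; at that point $\log b-\rho\log j^\ast=\rho$, hence $\log a_{j^\ast}=\rho j^\ast=\rho b^{1/\rho}/e$. This already reveals the claim, with $c_1$ of order $\rho/e$, and the only real work is to replace $j^\ast$ by a genuine positive integer.

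Concretely, I would set $n:=\lceil b^{1/\rho}/e\rceil$ and keep only this term:
\begin{equation*}
\sum_{j=0}^{\infty}\left(\frac{b}{j^\rho}\right)^j\;\geq\;a_n=\left(\frac{b}{n^\rho}\right)^n.
\end{equation*}
The hypothesis $b\geq (e/\rho)^\rho$ enters precisely here: it gives $b^{1/\rho}\geq e/\rho\geq e$, so $b^{1/\rho}/e\geq 1$, which guarantees both that $n\geq 1$ (so $a_n$ is well defined) and that $n\leq b^{1/\rho}/e+1\leq 2b^{1/\rho}/e$. Consequently $n^\rho\leq 2^\rho b/e^\rho$, i.e. $b/n^\rho\geq (e/2)^\rho>1$, and therefore, using also $n\geq b^{1/\rho}/e$,
\begin{equation*}
a_n\;\geq\;\left(\frac{e}{2}\right)^{\rho n}\;\geq\;\left(\frac{e}{2}\right)^{\rho b^{1/\rho}/e}\;=\;\exp\!\left(c_1 b^{1/\rho}\right),\qquad c_1:=\frac{\rho(1-\log 2)}{e}>0,
\end{equation*}
which is the desired inequality.

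The only genuine subtlety is the rounding step: one must ensure the chosen integer $n$ is at least $1$ and is comparable to the continuous optimiser $b^{1/\rho}/e$, and it is exactly the threshold $b\geq (e/\rho)^\rho$ that makes this work; everything else is a routine computation. If one prefers the cleaner constant, taking instead $n=\lfloor b^{1/\rho}/e\rfloor$ and splitting into the cases $b^{1/\rho}/e\geq 2$ and $b^{1/\rho}/e<2$ yields $b/n^\rho\geq e^\rho$ together with $n\geq b^{1/\rho}/(2e)$, giving the bound with $c_1=\rho/(2e)$.
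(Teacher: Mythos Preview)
Your proof is correct and takes a genuinely different, more elementary route than the paper's. You keep a single, near-optimal term $a_n$ with $n\approx b^{1/\rho}/e$ and bound it directly; the rounding is handled cleanly and the hypothesis $b\geq (e/\rho)^\rho$ is used exactly where it is needed to ensure $n\geq 1$ and $n\leq 2b^{1/\rho}/e$. The resulting constant $c_1=\rho(1-\log 2)/e$ (or $\rho/(2e)$ in your variant) depends on $\rho$, which is consistent with the statement.

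By contrast, the paper keeps \emph{all} terms with $j\rho\geq 1$, applies the Stirling-type bound $(j\rho/e)^{j\rho}\leq \Gamma(j\rho+1)$ to obtain
\[
\sum_{j\rho\geq 1}\left(\frac{b}{j^\rho}\right)^j \;\geq\; \sum_{j\rho\geq 1}\frac{\big(b^{1/\rho}\rho/e\big)^{j\rho}}{\Gamma(j\rho+1)},
\]
and then, by a combinatorial relabelling of indices ($\underline{j\rho}$, $\overline{j\rho}$), compares this Mittag--Leffler--type series with the ordinary exponential series to extract $\exp\!\big(\tfrac{\rho}{2e}b^{1/\rho}\big)$. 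Your single-term argument is shorter and avoids the Gamma function entirely; the paper's approach is more laborious here but has the advantage that the same series representation is reused symmetrically in the companion upper bound (their Lemma~\ref{MLestimate}), so the two estimates share a common scaffolding.
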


\begin{proof}
We begin by writing
\begin{equation}
\label{sum2parts}
\begin{aligned}
\sum_{j=0}^{\infty} \left( \frac{b}{j^{\rho}} \right)^j &=  1 + \sum_{j\in \Z_+, ~ j\rho<1} \left( \frac{b}{j^{\rho}} \right)^j + \sum_{j\in \Z_+, ~ j\rho \geq1}\left( \frac{b}{j^{\rho}} \right)^j \\
&\geq 1 + \sum_{j\in \Z_+, ~ j\rho \geq1}\left( \frac{b}{j^{\rho}} \right)^j .
\end{aligned}
\end{equation}
We now use the well known fact that for $j \rho \geq 1$, $(j \rho/e )^{j \rho} \leq \Gamma(j \rho + 1)$ to bound the last term of (\ref{sum2parts}) as follows,
\begin{equation}
\label{gammaexpression}
\sum_{j\in \Z_+, ~j\rho \geq1}\left( \frac{b}{j^{\rho}} \right)^j \geq \sum_{j\in \Z_+, ~j\rho \geq1} \frac{\left(b^{1/\rho} \left( \frac{\rho}{e}\right)\right)^{j \rho}}{\Gamma(j \rho + 1)}.
\end{equation}
Recalling that  $0 < \rho \leq 1$ and $j \in \Z_+$, for each positive integer $k \geq 2 $, we can always find a distinct product  $j \rho$ such that $\Gamma (j \rho + 1) \leq \Gamma(\overline{j \rho} + 1 ) =  (\overline{j \rho})! = k!$ and $j \rho \geq 1$. Here $\overline{j\rho}$ denotes the smallest integer greater than $j\rho$.  We will substitute this into the right hand side of (\ref{gammaexpression}).
Since $b \geq (e/\rho)^{\rho}$, we have $b^{1/\rho}(\rho / e) \geq 1$.  Denote $\underline{j\rho}$ to be the greatest integer less than $j\rho$, we thus have 
 \begin{equation*}
\begin{aligned}
\sum_{j \in \Z_+, ~ j\rho \geq1} &\frac{\left(b^{1/\rho} \left( \frac{\rho}{e}\right)\right)^{j \rho}}{\Gamma(j \rho + 1)} 
\geq \sum_{j \in \Z_+, ~j\rho \geq1} \frac{\left(b^{1/\rho} \left( \frac{\rho}{e}\right)\right)^{\underline{j \rho}}}{(\overline{j \rho})!} \\
&\geq  \sum_{k=2}^{\infty} \frac{\left(b^{1/\rho} \left( \frac{\rho}{e}\right)\right)^{k-1}}{k!} =      \sum_{k=1}^{\infty} \frac{\left(b^{1/\rho} \left( \frac{\rho}{e}\right)\right)^{k}}{(k+1)!} \\
&\geq   \sum_{k=1}^{\infty} \frac{2^{-k}\left(b^{1/\rho} \left( \frac{\rho}{e}\right)\right)^{k}}{k!} 
= \exp  \left( b^{1/\rho} \left( \frac{\rho}{2 e}\right)\right) - 1.
\end{aligned}
\end{equation*}
Substituting this into (\ref{sum2parts}) completes the proof.
\end{proof}
The next result essentially reverses the inequality proved in the above lemma.  The proof will use some of the notations introduced in the proof of the previous lemma.
\begin{lemma} \label{MLestimate}
Let $\rho \in (0,\, 1]$, then there exist constants $c_1$ and $c_2$ such that 
\begin{equation*}
\sum_{j=0}^{\infty} \frac{  b ^{j}}{ \Gamma(j \rho + 1)} \leq c_1 \exp(c_2b^{1/ \rho})\quad\text{for\,all}\quad b>0.
\end{equation*}
\end{lemma}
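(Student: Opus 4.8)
The plan is to bound the Mittag-Leffler-type series $\sum_{j\ge 0} b^j/\Gamma(j\rho+1)$ from above by comparing it, term by term, with the exponential series $\sum_{k\ge 0} (c_2 b^{1/\rho})^k/k!$, which is exactly the reverse direction of the estimate in Lemma \ref{sumlowerbd}. First I would dispose of the finitely many terms with $j\rho<1$: there the denominator $\Gamma(j\rho+1)$ is bounded below by a positive constant (since $\Gamma$ is continuous and positive on $(0,\infty)$), so that partial sum is at most a constant times $1+b+\dots$ over a fixed finite range, which is certainly $\le c\exp(b^{1/\rho})$ for, say, $b\ge 1$; the regime $b\le 1$ is handled separately and trivially since then the whole series is bounded by $\sum_j 1/\Gamma(j\rho+1)<\infty$, a convergent constant. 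So the real work is the tail $\sum_{j\in\Z_+,\,j\rho\ge 1} b^j/\Gamma(j\rho+1)$ for $b$ large.

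For that tail, the key step is the lower bound $\Gamma(j\rho+1)\ge (j\rho/e)^{j\rho}$ (the same Stirling-type inequality used in the previous lemma, valid for $j\rho\ge 1$). This gives
\begin{equation*}
\sum_{j\in\Z_+,\,j\rho\ge 1}\frac{b^j}{\Gamma(j\rho+1)} \le \sum_{j\in\Z_+,\,j\rho\ge 1}\left(\frac{b^{1/\rho}e}{j\rho}\right)^{j\rho} \le \sum_{j\in\Z_+,\,j\rho\ge 1}\left(\frac{b^{1/\rho}e/\rho}{j}\right)^{j},
\end{equation*}
where in the last step I used $j\rho\le j$ together with $x^{j\rho}\le \max(1,x)^{j}$, absorbing the harmless case $x\le 1$ into an additive constant. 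Writing $a:=b^{1/\rho}e/\rho$, it now suffices to show $\sum_{j\ge 0}(a/j)^j\le c_1'\exp(c_2' a)$, which will then translate back into the claimed bound with $c_2$ proportional to $e/\rho$.

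The bound $\sum_{j\ge 0}(a/j)^j\le c_1'\exp(c_2' a)$ is itself a standard fact, and I would prove it by splitting at $j\approx 2a$: for $j\ge \lceil 2a\rceil$ one has $a/j\le 1/2$, so $(a/j)^j\le 2^{-j}$ and that part of the sum is bounded by a constant; for $1\le j< \lceil 2a\rceil$ one uses $(a/j)^j\le e^{j}\cdot(a/j)^j\cdot e^{-j}$ and the elementary inequality $(a/j)^j\le e^{a/e\cdot e}=e^{a}$-type bound — more cleanly, $(a/j)^j e^{-j}\le \sup_{x>0}(a/x)^x e^{-x}$ is not quite right, so instead I would simply note $(a/j)^j\le \sum_k a^k/k!\cdot(\text{something})$; the cleanest route is: for each $j$, $(a/j)^j=a^j/j^j\le a^j/(j!/e^j)\cdot(\text{Stirling})\le e^j a^j/j!$, hence $\sum_j (a/j)^j\le \sum_j (ea)^j/j!=e^{ea}$. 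That last chain, using $j!\le e^{1-j}j^j$ rearranged to $j^j\ge e^{j-1}j!$, i.e. $1/j^j\le e^{1-j}/j!$, is the quickest and avoids any case split entirely.

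The main obstacle is essentially bookkeeping rather than depth: one must be careful that the inequality $\Gamma(j\rho+1)\ge(j\rho/e)^{j\rho}$ is only claimed for $j\rho\ge 1$, so the small-$j$ terms genuinely need the separate (trivial) treatment above, and one must track that passing from the exponent $j\rho$ to $j$ only costs a multiplicative constant in the final exponent (this is why the constant $c_2$ depends on $\rho$, blowing up as $\rho\to 0^+$, consistent with $b^{1/\rho}$ growing faster for small $\rho$). No single step is hard; the care needed is in choosing the split points so that all the discarded terms really are summable to a universal constant $c_1$.
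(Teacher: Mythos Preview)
Your overall strategy is sound and, in fact, differs from the paper's route: the paper never invokes the Stirling bound $\Gamma(j\rho+1)\ge (j\rho/e)^{j\rho}$ for the upper estimate. Instead it sandwiches $j\rho$ between the nearest integers $\underline{j\rho}$ and $\overline{j\rho}$, uses $\Gamma(j\rho+1)\ge(\underline{j\rho})!$ together with $(b^{1/\rho})^{j\rho}\le (b^{1/\rho})^{\overline{j\rho}}$ (for $b\ge 1$) or $(b^{1/\rho})^{j\rho}\le (b^{1/\rho})^{\underline{j\rho}}$ (for $b<1$), and then relabels to land directly on the exponential series. Your reduction to the auxiliary problem $\sum_{j\ge 1}(a/j)^j\le c_1'\exp(c_2'a)$ is a legitimate alternative and arguably cleaner conceptually, since it separates the $\rho$-dependence from the final summation.

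There is, however, one concrete slip. The inequality you quote at the end, $j!\le e^{1-j}j^j$ (equivalently $j^j\ge e^{j-1}j!$), is false already at $j=2$: one gets $4\ge 2e\approx 5.44$. The correct Stirling upper bound is $j!\le e\sqrt{j}\,(j/e)^j$, which only yields $j^j\ge e^{j-1}j!/\sqrt{j}$, i.e.\ $1/j^j\le \sqrt{j}\,e^{1-j}/j!$. This extra $\sqrt{j}$ does no damage: from $(a/j)^j\le e\sqrt{j}\,(a/e)^j/j!$ and $\sqrt{j}/j!\le 1/(j-1)!$ you get
\[
\sum_{j\ge 1}\Big(\frac{a}{j}\Big)^j\le e\sum_{j\ge 1}\frac{(a/e)^j}{(j-1)!}=a\,e^{a/e},
\]
which is $\le c_1'\exp(c_2'a)$ as needed. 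With this correction your argument goes through; the remaining pieces (the finite range $j\rho<1$, and the trivial regime $b\le 1$ via $\sum_j 1/\Gamma(j\rho+1)<\infty$) are handled exactly as you describe.
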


\begin{proof}
We start by writing 
\begin{equation} \label{sumupperbd}
\sum_{j=0}^{\infty} \frac{ \left( b^{1/ \rho} \right)^{j \rho}}{ \Gamma(j \rho + 1)} = \sum_{j \in \N, ~ j \rho < 1} \frac{ \left( b^{1/ \rho} \right)^{j \rho}}{ \Gamma(j \rho + 1)} + \sum_{j \in \N, ~ j \rho \geq 1} \frac{ \left( b^{1/ \rho} \right)^{j \rho}}{ \Gamma(j \rho + 1)}. 
\end{equation}
We consider the case $b \in (0,1)$ first. 
\begin{equation*}
\begin{aligned}
\sum_{j \in \N, ~ j \rho \geq 1} \frac{ \left( b^{1/ \rho} \right)^{j \rho}}{ \Gamma(j \rho + 1)} &\leq \sum_{j \in \N, ~ j \rho \geq 1} \frac{ \left( b^{1/ \rho} \right)^{\underline{j \rho}}}{ (\underline{j \rho}) !} \\
&\leq c_1 \sum_{k=1}^{\infty} \frac{ \left( b^{1/ \rho} \right)^{k}}{ k !}\\
&\leq c_2 \left( \exp (b^{1 / \rho}) - 1 \right),
\end{aligned}
\end{equation*}
and
\begin{equation*}
\sum_{j \in \N, ~ j \rho < 1} \frac{ \left( b^{1/ \rho} \right)^{j \rho}}{ \Gamma(j \rho + 1)} \leq c_3,
\end{equation*}
where we have $\Gamma(j \rho + 1)$ is bounded below by a constant for all $j \rho <1$. Substituting them back into (\ref{sumupperbd}) yields
\begin{equation*}
\sum_{j=0}^{\infty} \frac{ \left( b^{1/ \rho} \right)^{j \rho}}{ \Gamma(j \rho + 1)} \leq c_4 \left( \exp (b^{1 / \rho}) +1 \right) \leq c_5  \exp (b^{1 / \rho}). 
\end{equation*}
\medskip
\par \noindent
We now turn our attention to the case $b \geq 1$ for which we have
\begin{equation*}
\begin{aligned}
\sum_{j \in \N, ~ j \rho \geq 1} \frac{ \left( b^{1/ \rho} \right)^{j \rho}}{ \Gamma(j \rho + 1)} &\leq \sum_{j \in \N, ~ j \rho \geq 1} \frac{ \left( b^{1/ \rho} \right)^{\overline{j \rho}}}{ (\underline{j \rho}) !} \\
&=  \sum_{j \in \N, ~ j \rho \geq 1} \frac{ \left( b^{1/ \rho} \right)^{\underline{j \rho} + 1}}{ (\underline{j \rho}) !} 
\end{aligned}
\end{equation*}
By relabelling the indices, we see that the above sum is bounded by 
\begin{equation*}
\begin{aligned}
c_6 \sum_{k=1}^{\infty}  \frac{ \left( b^{1/ \rho} \right)^{k+1}}{ k !} 
&\leq  c_7\sum_{k=2}^{\infty}  \frac{ \left( 2 b^{1/ \rho} \right)^{k}}{ k !} \\
&\leq c_8\left( \exp(2 b^{1/ \rho}) - 2 b^{1/ \rho} - 1 \right).
\end{aligned}
\end{equation*}
We also have
\begin{equation*}
\sum_{j \in \N, ~ j \rho < 1} \frac{ \left( b^{1/ \rho} \right)^{j \rho}}{ \Gamma(j \rho + 1)} \leq 2 c_9 b^{1/ \rho}.
\end{equation*}
Substituting them back into (\ref{sumupperbd}) yields
\begin{equation*}
\sum_{j=0}^{\infty} \frac{ \left( b^{1/ \rho} \right)^{j \rho}}{ \Gamma(j \rho + 1)} \leq c_{10}  \exp(c_{11} b^{1/ \rho}),
\end{equation*}
which is exactly what we wanted to prove.
\end{proof}

We now present some results concerning renewal inequalities. The proof is very similar to those proved in \cite{FoondunTianLiu}. The difference is that here we want bounds on the functions involved rather than finding their asymptotic properties as was the case in \cite{FoondunTianLiu}.  We will only sketch the proof. More results about renewal inequalities can be found in references such as \cite{Henry}.
\begin{proposition} \label{renewalineq}
Let $\rho > 0$ and suppose that $f(t)$ is a locally integrable function satisfying
\begin{equation*}
f(t) \leq c_1 + \kappa \int_0^t (t - s)^{\rho - 1} f(s) ds ~~~\text{for all}~~~ t>0 ,
\end{equation*}
where $c_1$ is some positive number. Then, we have 
\begin{equation*}
f(t) \leq c_2 \exp (c_3 (\Gamma (\rho))^{1 / \rho} \kappa^{1 / \rho} t) \quad\text{for all}\quad t>0,
\end{equation*}
for some positive constants $c_2$ and $c_3$.
\end{proposition}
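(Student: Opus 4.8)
The plan is to run a Picard/Volterra-type iteration on the hypothesised inequality and then sum the resulting series with the help of Lemma \ref{MLestimate}. Write $(Tg)(t):=\kappa\int_0^t(t-s)^{\rho-1}g(s)\,\d s$, so that the assumption reads $f\leq c_1+Tf$. Since $T$ is linear and order preserving (its kernel $(t-s)^{\rho-1}$ is nonnegative), applying $T$ repeatedly to $f\leq c_1+Tf$ gives, for every $N\geq1$,
\begin{equation*}
f(t)\leq c_1\sum_{n=0}^{N-1}(T^n\1)(t)+(T^Nf)(t)\qquad\text{for all }t>0 .
\end{equation*}

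The first real step is to compute $(T^n\1)(t)$ explicitly. Writing $k(s)=s^{\rho-1}$, one has $T^ng=\kappa^n\,(k^{*n}*g)$, and an induction on $n$ based on the Beta integral
\begin{equation*}
\int_0^t(t-s)^{\rho-1}s^{a}\,\d s=\frac{\Gamma(\rho)\Gamma(a+1)}{\Gamma(\rho+a+1)}\,t^{\rho+a}
\end{equation*}
shows that $k^{*n}(s)=\Gamma(\rho)^{n}s^{n\rho-1}/\Gamma(n\rho)$. Taking $g\equiv1$ and integrating once more yields
\begin{equation*}
(T^n\1)(t)=\frac{\kappa^{n}\,\Gamma(\rho)^{n}}{\Gamma(n\rho+1)}\,t^{n\rho}.
\end{equation*}

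Next I would dispose of the remainder term $(T^Nf)(t)=\kappa^N\Gamma(\rho)^N\Gamma(N\rho)^{-1}\int_0^t(t-s)^{N\rho-1}f(s)\,\d s$. Because $f$ is only assumed locally integrable, one must first iterate enough to make the kernel nonsingular: fixing $t>0$ and choosing $N$ with $N\rho\geq1$, we have $(t-s)^{N\rho-1}\leq t^{N\rho-1}$ on $[0,t]$, so
\begin{equation*}
(T^Nf)(t)\leq\frac{\kappa^{N}\,\Gamma(\rho)^{N}}{\Gamma(N\rho)}\,t^{N\rho-1}\int_0^t f(s)\,\d s .
\end{equation*}
Letting $N\to\infty$, the factorial-type growth of $\Gamma(N\rho)$ dominates the exponential factor $(\kappa\Gamma(\rho)t^{\rho})^{N}$, hence $(T^Nf)(t)\to0$. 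Consequently $f(t)\leq c_1\sum_{n=0}^{\infty}(\kappa\Gamma(\rho)t^{\rho})^{n}/\Gamma(n\rho+1)$, and applying Lemma \ref{MLestimate} with $b=\kappa\Gamma(\rho)t^{\rho}$ (this is the range $\rho\in(0,1]$ needed in our applications; the case $\rho>1$ is entirely analogous, the series converging even faster) gives $f(t)\leq c_2\exp\!\big(c_3(\kappa\Gamma(\rho)t^{\rho})^{1/\rho}\big)=c_2\exp\!\big(c_3\,\Gamma(\rho)^{1/\rho}\kappa^{1/\rho}t\big)$, since $(t^{\rho})^{1/\rho}=t$. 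The only genuinely delicate point is the remainder bound — one needs the preliminary iteration to turn the singular kernel $s^{\rho-1}$ into a bounded one before mere local integrability of $f$ can be exploited; the rest is bookkeeping with the Beta and Gamma functions.
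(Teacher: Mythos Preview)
Your proof is correct and follows the same route as the paper: iterate the integral inequality to obtain $f(t)\le c_1\sum_{k\ge0}(\Gamma(\rho)\kappa t^{\rho})^k/\Gamma(k\rho+1)$ and then invoke Lemma~\ref{MLestimate}. The paper merely cites \cite{FoondunTianLiu} for the iteration, whereas you carry it out explicitly, including the Beta-integral computation of $T^n\1$ and the vanishing of the remainder $T^Nf$; your treatment of the remainder (first iterating until the kernel is bounded, then using local integrability of $|f|$) is a nice point that the paper leaves implicit.
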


\begin{proof}
The iterative procedure of Proposition 2.5 in \cite{FoondunTianLiu} yields
\begin{equation*}
f(t) \leq c_1 \sum_{k=0}^{\infty} \frac{ ( \Gamma (\rho) \kappa t^{\rho} ) ^k}{\Gamma(k \rho + 1)}.
\end{equation*}
Applying Lemma \ref{MLestimate} with $b = \Gamma (\rho) \kappa t^{\rho} $ proves the result.
\end{proof}

We now present the converse of the above result.

\begin{proposition}\label{renewalineq2}
Let $\rho > 0$ and suppose that $f(t)$ is a nonnegative, locally integrable function satisfying
\begin{equation*}
f(t) \geq c_1 + \kappa \int_0^t (t - s)^{\rho - 1} f(s) ds ~~~\text{for all}~~~ t>0 ,
\end{equation*}
where $c_1$ is some positive number. Then, we have 
\begin{equation*}
f(t) \geq c_2 \exp (c_3 (\Gamma (\rho))^{1 / \rho} \kappa^{1 / \rho} t) \quad\text{for all}\quad t>\frac{e}{\rho}(\Gamma(\rho)\kappa)^{-1/\rho},
\end{equation*}
for some positive constants $c_2$ and $c_3$.
\end{proposition}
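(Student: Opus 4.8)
The plan is to run the proof of Proposition \ref{renewalineq} in reverse: iterate the integral inequality, then invoke Lemma \ref{sumlowerbd} where that proof used Lemma \ref{MLestimate}. Write $(\sA g)(t):=\kappa\int_0^t(t-s)^{\rho-1}g(s)\,\d s$; this operator preserves nonnegativity, and since $\tau\mapsto\tau^{\rho-1}$ is locally integrable for $\rho>0$ it maps locally integrable functions to locally integrable ones. Applying $\sA$ repeatedly to $f\geq c_1+\sA f$ and using $f\geq 0$ to discard the nonnegative remainder $\sA^{n+1}f$ at each stage — so that no growth control on $f$ is needed, which makes this direction cleaner than in Proposition \ref{renewalineq} — monotone convergence gives
\begin{equation*}
f(t)\geq c_1\sum_{k=0}^{\infty}(\sA^k\1)(t)=c_1\sum_{k=0}^{\infty}\frac{\bigl(\Gamma(\rho)\kappa t^{\rho}\bigr)^{k}}{\Gamma(k\rho+1)},
\end{equation*}
where $(\sA^k\1)(t)=\Gamma(\rho)^{k}\kappa^{k}t^{k\rho}/\Gamma(k\rho+1)$ follows by induction from the Beta-integral identity $\int_0^t(t-s)^{\rho-1}s^{(k-1)\rho}\,\d s=\Gamma(\rho)\Gamma((k-1)\rho+1)t^{k\rho}/\Gamma(k\rho+1)$. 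This is precisely the series appearing in the proof of Proposition \ref{renewalineq}, now as a lower bound.

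The remaining task is to feed this Mittag--Leffler-type series into Lemma \ref{sumlowerbd}. The bridge is an elementary estimate on the Gamma function: since $\log\Gamma$ is convex with $\Gamma(1)=1$, for each integer $k\geq 1$ the map $\rho\mapsto\log\Gamma(k\rho+1)$ is convex on $[0,1]$, vanishes at $0$, and equals $\log(k!)$ at $1$, hence $\Gamma(k\rho+1)\leq(k!)^{\rho}\leq k^{\rho k}$ for $\rho\in(0,1]$. Setting $b:=\Gamma(\rho)\kappa t^{\rho}$ we therefore get
\begin{equation*}
\sum_{k=0}^{\infty}\frac{b^{k}}{\Gamma(k\rho+1)}\geq\sum_{k=0}^{\infty}\frac{b^{k}}{k^{\rho k}}=\sum_{k=0}^{\infty}\Bigl(\frac{b}{k^{\rho}}\Bigr)^{k},
\end{equation*}
the $k=0$ terms being $1$ on both sides. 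Lemma \ref{sumlowerbd} bounds the right-hand side below by $\exp(c\,b^{1/\rho})=\exp\!\bigl(c\,(\Gamma(\rho))^{1/\rho}\kappa^{1/\rho}t\bigr)$ provided $b\geq(e/\rho)^{\rho}$, and $b\geq(e/\rho)^{\rho}$ is exactly the condition $t>\frac{e}{\rho}(\Gamma(\rho)\kappa)^{-1/\rho}$. Combining the two displays yields $f(t)\geq c_2\exp\!\bigl(c_3(\Gamma(\rho))^{1/\rho}\kappa^{1/\rho}t\bigr)$ on that range, which is the assertion.

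The argument is short and contains no single hard step. The points needing a little care are the measure-theoretic bookkeeping in the iteration — finiteness of each $(\sA^k\1)(t)$ and the legitimacy of summing the termwise inequalities and passing to the limit, both immediate from local integrability and $\rho>0$ — and, more delicately, confirming that the restriction on $t$ forced by the hypothesis $b\geq(e/\rho)^{\rho}$ of Lemma \ref{sumlowerbd} coincides with the one claimed. Finally, Lemma \ref{sumlowerbd} (and hence this argument) is stated for $\rho\in(0,1]$, which is the range relevant to the applications; for $\rho>1$ the estimate $\Gamma(k\rho+1)\leq k^{\rho k}$ fails and one would instead restrict the series to a suitable arithmetic subsequence of indices, which I would not belabour.
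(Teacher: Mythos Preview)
Your argument is correct and follows the same route as the paper: iterate the integral inequality to obtain the Mittag--Leffler lower bound $f(t)\geq c_1\sum_{k\geq 0}(\Gamma(\rho)\kappa t^{\rho})^{k}/\Gamma(k\rho+1)$, then invoke Lemma \ref{sumlowerbd} with $b=\Gamma(\rho)\kappa t^{\rho}$. The only cosmetic difference is that the paper appeals directly to the second half of the proof of Lemma \ref{sumlowerbd} (which already starts from a sum with $\Gamma(j\rho+1)$ in the denominator), whereas you insert the clean convexity bound $\Gamma(k\rho+1)\leq k^{\rho k}$ so that the lemma can be quoted as a black box; both yield the same conclusion under the same threshold on $t$.
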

\begin{proof}
As in the proof of Proposition 2.6 of \cite{FoondunTianLiu}, we have 
\begin{equation*}
f(t) \geq c_1 \sum_{k=0}^{\infty} \frac{ ( \Gamma (\rho) \kappa t^{\rho} ) ^k}{\Gamma(k \rho + 1)}.
\end{equation*}
From the arguments used in the proof of Lemma \ref{sumlowerbd}, we have the desired result once we choose $b = \Gamma (\rho) \kappa t^{\rho} $.
\end{proof}

\section{Proof of Theorem \ref{theoremwhite}.}
We begin with a calculus lemma, which won't be needed for the proof of Theorem \ref{theoremwhite}, but required for later on.
\begin{lemma}\label{integrals}
For any $t>0$ and $\alpha>1$, we have 
\begin{equation*}
\begin{aligned}
\int_0^t\int_0^{s_1}\cdots \int_0^{s_{k-1}}&\frac{1}{[(t-s_1)(s_1-s_2)\cdots(s_{k-1}-s_k)]^{1/\alpha}}\d s_{k}\d s_{k-1}\cdots \d s_{1}\\
&\geq c_1^k\left(\frac{t}{k}\right)^{k(\alpha-1)/\alpha},
\end{aligned}
\end{equation*}
where $c_1$ is a constant independent of $k$.
\end{lemma}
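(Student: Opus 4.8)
The plan is to estimate the iterated integral from below by restricting the domain of integration to a well-chosen subregion on which the integrand is easy to bound, and then to compute (or lower-bound) the volume of that subregion. The natural choice is the simplex of consecutive gaps that are all comparable to $t/k$. Concretely, write $\Delta_i := s_{i-1} - s_i$ for $i = 1, \dots, k$ with the conventions $s_0 = t$ and $s_k$ the innermost variable, so that the product in the denominator is $(\Delta_1 \Delta_2 \cdots \Delta_k)^{1/\alpha}$ and $\sum_{i=1}^k \Delta_i \leq t$. On the region where each $\Delta_i \in [\,t/(2k),\, t/k\,]$ — which is a genuine sub-simplex of $\{0 < s_k < s_{k-1} < \cdots < s_1 < t\}$ since the gaps then sum to at most $t$ — the integrand is bounded below by $\bigl( (t/k)^k \bigr)^{-1/\alpha} = (t/k)^{-k/\alpha}$.

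First I would change variables from $(s_1, \dots, s_k)$ to the gaps $(\Delta_1, \dots, \Delta_k)$; this is a unimodular linear change, so the Jacobian is $1$. The image of the original domain is $\{\Delta_i > 0,\ \sum_i \Delta_i < t\}$. Next I would restrict to the box $\prod_{i=1}^k [\,t/(2k),\, t/k\,]$, check it lies inside that image, and bound the integrand below by $(t/k)^{-k/\alpha}$ on it. The volume of the box is $(t/(2k))^k$. Hence the iterated integral is at least
\begin{equation*}
\left( \frac{t}{2k} \right)^k \left( \frac{t}{k} \right)^{-k/\alpha} = 2^{-k} \left( \frac{t}{k} \right)^{k(1 - 1/\alpha)} = 2^{-k} \left( \frac{t}{k} \right)^{k(\alpha - 1)/\alpha},
\end{equation*}
which is the claimed bound with $c_1 = 1/2$, a constant independent of $k$ (the hypothesis $\alpha > 1$ guarantees the exponent $(\alpha-1)/\alpha$ is positive, so that the factor $(t/k)^{k(\alpha-1)/\alpha}$ genuinely captures the decay, though the lemma's inequality does not actually require this).

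The only point requiring a little care — and the step I expect to be the main (minor) obstacle — is verifying that the restricted box sits inside the simplex, i.e.\ that choosing each gap in $[t/(2k), t/k]$ forces $0 < s_k < s_{k-1} < \cdots < s_1 < t$: the inequalities $s_i < s_{i-1}$ hold because each $\Delta_i > 0$, and $s_1 < t$ together with $s_k > 0$ follow from $\sum_i \Delta_i \le k \cdot (t/k) = t$ and $\sum_i \Delta_i \ge k \cdot t/(2k) = t/2 > 0$. So the box is admissible and the bound follows. An alternative, if one prefers to avoid the change of variables, is to integrate the variables one at a time from the innermost outward, at each stage retaining only the contribution of a sub-interval of length of order $t/k$ on which the relevant factor $(s_{i-1} - s_i)^{-1/\alpha}$ is bounded below; this produces the same product $2^{-k}(t/k)^{k(\alpha-1)/\alpha}$ up to harmless constants, but the gap-variable computation above is cleaner.
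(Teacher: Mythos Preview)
Your proof is correct and follows essentially the same approach as the paper: restrict the domain so that each gap $s_{i-1}-s_i$ is at most $t/k$, then pass to the gap variables $\tilde s_i=s_{i-1}-s_i$ (a unimodular change) to decouple the integral into a $k$-fold product. The only minor difference is that the paper keeps the full interval $[0,t/k]$ for each gap and computes $\int_0^{t/k}\tilde s^{-1/\alpha}\,\d\tilde s=\frac{\alpha}{\alpha-1}(t/k)^{(\alpha-1)/\alpha}$ exactly, whereas you shrink further to $[t/(2k),t/k]$ and bound the integrand below by a constant; this costs you a factor of $2^{-k}$ in the constant but is otherwise the same argument.
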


\begin{proof}

We begin by reducing the region of integration as follows,
\begin{equation*}
\begin{aligned}
\int_0^t&\int_0^{s_1}\cdots \int_0^{s_{k-1}}\frac{1}{[(t-s_1)(s_1-s_2)\cdots(s_{k-1}-s_k)]^{1/\alpha}}\d s_{k}\d s_{k-1}\cdots \d s_{1}\\
&\geq \int_{t-t/k}^{t}\int_{s_1-t/k}^{s_1}\cdots \int_{s_{k-1}-t/k}^{s_{k-1}}\frac{1}{[(t-s_1)(s_1-s_2)\cdots(s_{k-1}-s_k)]^{1/\alpha}}\d s_{k}\d s_{k-1}\cdots \d s_{1}.
\end{aligned}
\end{equation*}
We set $s_0:=t$ and make the substitution $\tilde{s}_i=s_{i-1}-s_i$ for $i=1,\cdots, k$. This together with some calculus completes the proof.
\end{proof}

We now present a key result. As mentioned in the introduction, this quantifies the relationship between the ``dissipative" effect of the fractional Laplacian and the ``growth" induced by the noise term. It will be clear that the proof relies heavily on the heat kernel estimates for the fractional Laplacian.  

Fix $t>0$ and set
\begin{equation*}
g_t:=\inf_{x\in B(0,\,t^{1/\alpha})}(\sG u)_t(x).
\end{equation*}
We then have the following.

\begin{proposition}
\label{whitenoiselowbd}
Let $t>0$, then for all $x\in B(0, t^{1/\alpha})$ the following holds
\begin{equation*}
\E|u_t(x)|^2\geq g_t^2\sum_{k=0}^\infty \left(\frac{l_\sigma^2\lambda^2\alpha}{\alpha-1} \right)^k\left(\frac{t}{k}\right)^{k(\alpha-1)/\alpha}.
\end{equation*}
\end{proposition}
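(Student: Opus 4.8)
The plan is to iterate the mild-solution identity for the second moment and exploit the lower bound $\sigma(x)^2 \geq l_\sigma^2 x^2$ together with the lower heat-kernel bound in \eqref{estimatep}, restricted to the ball $B(0,t^{1/\alpha})$. By Walsh isometry,
\begin{equation*}
\E|u_t(x)|^2 = (\sG u)_t(x)^2 + \lambda^2 \int_0^t \int_{\R} p_{t-s}^2(x-y)\, \E|\sigma(u_s(y))|^2 \, \d y \, \d s \geq g_t^2 + l_\sigma^2 \lambda^2 \int_0^t \int_{\R} p_{t-s}^2(x-y)\, \E|u_s(y)|^2 \, \d y \, \d s.
\end{equation*}
First I would set up an auxiliary quantity. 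For $x \in B(0,t^{1/\alpha})$ and $s<t$, when $y \in B(x, (t-s)^{1/\alpha})$ the estimate \eqref{estimatep} gives $p_{t-s}(x-y) \geq c_1 (t-s)^{-1/\alpha}$, hence $p_{t-s}^2(x-y) \geq c_1^2 (t-s)^{-2/\alpha}$. The key bookkeeping point is that if $s$ stays within a suitable time window and $x \in B(0, t^{1/\alpha})$, then the relevant spatial ball for the next iterate still sits (up to constants) inside $B(0, s^{1/\alpha})$, so that the bound $\E|u_s(y)|^2 \geq g_s^2 \geq \cdots$ can be fed back in. Actually it is cleaner to define $F(t) := \inf_{x \in B(0,t^{1/\alpha})} \E|u_t(x)|^2$ and show $F(t) \geq g_t^2 + c\,\lambda^2 \int_0^t (t-s)^{-2/\alpha} |B(x,(t-s)^{1/\alpha}) \cap B(0,s^{1/\alpha})|\, F(s)\,\d s$; one checks the intersection has volume comparable to $(t-s)^{d/\alpha}$ (here $d=1$) in the regime that matters, which kills the $(t-s)^{-2/\alpha}$ down to $(t-s)^{-1/\alpha}$, exactly the exponent appearing in Lemma \ref{integrals}.

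The main step is then the iteration itself. Unfolding the inequality $k$ times and discarding all but the "all-time-variables-nested" term, one obtains
\begin{equation*}
\E|u_t(x)|^2 \geq \sum_{k=0}^{\infty} (c\, l_\sigma^2 \lambda^2)^k \int_0^t \int_0^{s_1} \cdots \int_0^{s_{k-1}} \frac{g_{s_k}^2}{[(t-s_1)(s_1-s_2)\cdots(s_{k-1}-s_k)]^{1/\alpha}} \, \d s_k \cdots \d s_1,
\end{equation*}
where I have used $g_{s_k}^2 \geq $ (nonneg.) and monotonicity/positivity to retain the term; in fact for a clean statement one bounds $g_{s_k}^2 \geq g_t^2$ is false in general, so instead I would either (i) keep $g_{s_k}$ inside and note $g_{s_k} \geq g_t$ does hold because $(\sG u)$ evaluated at a point of a smaller ball over a shorter time is at least the infimum over the larger ball — this needs the monotonicity $g_s \geq g_t$ for $s \leq t$, which follows from the semigroup property $\sG_t u_0 = \sG_{t-s}\sG_s u_0$ and $p \geq 0$ — or (ii) restrict the inner integral so the last time variable lies in $[t/2, t]$. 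Taking route (i), pull $g_t^2$ out and invoke Lemma \ref{integrals} to bound the $k$-fold integral below by $c_1^k (t/k)^{k(\alpha-1)/\alpha}$, absorbing the constant $c_1$ and the other constants into a single constant which, combined with $l_\sigma^2 \lambda^2$ and the factor $\alpha/(\alpha-1)$ tracked through, yields the stated series $\sum_k (l_\sigma^2 \lambda^2 \alpha/(\alpha-1))^k (t/k)^{k(\alpha-1)/\alpha}$.

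The hard part will be the geometric bookkeeping: making precise that throughout the iteration the spatial integration variable can be kept in a ball over which the relevant heat-kernel lower bound and the $g$-lower bound are simultaneously valid, and that the volume of the intersection $B(x,(t-s)^{1/\alpha}) \cap B(0, s^{1/\alpha})$ is bounded below by a constant multiple of $(t-s)^{d/\alpha}$ in the relevant range of $s$ (for $s$ not too close to $t$, with the near-$t$ contribution handled separately or simply discarded since we only need a lower bound). Once the reduction to the multiple integral of Lemma \ref{integrals} is in place, the rest is the calculus already packaged in that lemma, and the precise tracking of the constant $\alpha/(\alpha-1)$ — which comes from $\int_0^t (t-s)^{-1/\alpha}\d s = \frac{\alpha}{\alpha-1} t^{(\alpha-1)/\alpha}$ at the first level and propagates through the Beta-integral computation in Lemma \ref{integrals}.
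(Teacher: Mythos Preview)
Your route (i) does not work. The monotonicity $g_s\ge g_t$ for $s\le t$ is false in general: if $u_0=\mathbf 1_A$ with $A$ bounded and $0\notin A$, then for small $s$ the ball $B(0,s^{1/\alpha})$ is disjoint from $A$, so $(\sG u)_s(x)$ is as small as you like uniformly over $x\in B(0,s^{1/\alpha})$ and hence $g_s\to 0$ as $s\downarrow 0$, while $g_t>0$ for $t$ large. The semigroup identity $(\sG u)_t=P_{t-s}(\sG u)_s$ expresses $(\sG u)_t(x)$ as an average of $(\sG u)_s$ over \emph{all} of $\R$, not over $B(0,s^{1/\alpha})$; it yields neither inequality between $g_s$ and $g_t$. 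Relatedly, the geometry you set up---intersecting with the time-varying ball $B(0,s^{1/\alpha})$---collapses for $s$ near $0$, so the renewal scheme $F(t)\ge g_t^2+c\lambda^2\int_0^t(t-s)^{-1/\alpha}F(s)\,\d s$ cannot close as written. Route (ii) is a viable idea but you have not carried it out, and it does not combine directly with Lemma~\ref{integrals} since that lemma integrates $s_k$ down to $0$.

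The paper organises the bookkeeping differently, and this is exactly what resolves the ``hard part'' you flagged. It never introduces $F(\cdot)$ or the shrinking balls $B(0,s^{1/\alpha})$. After fully unfolding the iteration it keeps \emph{every} spatial variable $z_i$ in the \emph{fixed} ball $B(0,t^{1/\alpha})$, then shrinks each time increment $s_{i-1}-s_i$ to $(0,t/k)$ and substitutes $\tilde s_i=s_{i-1}-s_i$ (this is the change of variable from Lemma~\ref{integrals}, but used inline; the paper does not invoke Lemma~\ref{integrals} itself here, since the spatial integrals must be carried along). The heat-kernel lower bound is then applied on the sets $\sA_i:=B(z_{i-1},\tilde s_i^{1/\alpha})\cap B(0,t^{1/\alpha})$, which have measure $\ge c\,\tilde s_i^{1/\alpha}$ because the small ball has radius $\tilde s_i^{1/\alpha}\le (t/k)^{1/\alpha}\le t^{1/\alpha}$ and its centre $z_{i-1}$ already lies in $B(0,t^{1/\alpha})$. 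That is your ``intersection has the right volume'' estimate, but with the outer ball held fixed at radius $t^{1/\alpha}$ rather than shrinking with the time variable; this is precisely what makes the estimate uniform over the iteration.
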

\begin{proof}
We start off with the mild formulation of the solution, take the second moment and then use the lower bound on $\sigma$ to write
\begin{equation*}
\E|u_t(x)|^2\geq |(\sG u)_t(x)|^2+ l_\sigma^2\lambda^2\int_0^t\int_{\R}p_{t-s}^2(x,\,y)\E|u_s(y)|^2\d s\d y,
\end{equation*}
which immediately gives 
\begin{equation*}
\begin{aligned}
\E|u_t(x)|^2&\geq |(\sG u)_t(x)|^2+ l_\sigma^2\lambda^2\int_0^t\int_{\R}p_{t-s}^2(x,\,y)\E|u_s(y)|^2\d s\,\d y\\
&\geq |(\sG u)_t(x)|^2+ l_\sigma^2\lambda^2\int_0^t\int_{\R}p_{t-s_1}^2(x,\,z_1)|(\sG u)_{s_1}(z_1)|^2\d s_1\,\d z_1\\
&+l_\sigma^4\lambda^4\int_0^t\int_{\R}\int_0^{s_1}\int_{\R}p_{t-s_1}^2(x,\,z_1)p_{s_1-s_2}^2(z_1,\,z_2)\E|u_{s_2}(z_2)|^2\d s_2\,\d z_2\,\d s_1\,\d z_1.
\end{aligned}
\end{equation*}
And if we use the following 
\begin{equation*}
\E|u_{s_i}(z_i)|^2\geq |(\sG u)_{s_i}(z_i)|^2+ l_\sigma^2\int_0^{s_i}\int_{\R}p_{s_i-s_{i+1}}^2(z_i,\,z_{i+1})\E|u_{s_{i+1}}(z_{i+1})|^2\d s_{i+1}\d z_{i+1},
\end{equation*}
we end up with 
\begin{equation*}
\begin{aligned}
\E|u_t(&x)|^2\\
&\geq |(\sG u)_t(x)|^2\\
&+\sum_{k=1}^\infty (\lambda l_\sigma)^{2k}\int_0^t\int_{\R}\int_0^{s_1}\int_{\R}\dots\int_0^{s_{k-1}}\int_{\R} |(\sG u)_{s_k}(z_k)|^2\\
&\prod_{i=1}^{k}p^2_{s_{i-1}-s_i}(z_{i-1},z_i)\,\d z_{k+1-i}\,\d s_{k+1-i},
\end{aligned}
\end{equation*}
where we have used the convention that $s_0:=t$ and $z_0:=x$. We now restrict all $z_k$ such that $z_k \in B(0,\,t^{1/\alpha})$ for all $k \geq 0$ to obtain

\begin{equation*}
\begin{aligned}
\E|u_t(x)|^2 &\geq g_t^2\\
&+g_t^2\sum_{k=1}^\infty (\lambda l_\sigma)^{2k}\int_0^t\int_\R \int_0^{s_1}\int_\R \dots\int_0^{s_{k-1}}\int_{B(0,\,t^{1/\alpha})} \\
&\prod_{i=1}^kp^2_{s_{i-1}-s_i}(z_{i-1},z_i)\,\d z_{k+1-i}\, \d s_{k+1-i}.
\end{aligned}
\end{equation*}
We now shrink the temporal region of integration and make a proper change of variable, as in the proof of Lemma \ref{integrals}, we end up with
\begin{equation*}
\begin{aligned}
\E|u_t(x)|^2 &\geq g_t^2\\
&+g_t^2\sum_{k=1}^\infty (\lambda l_\sigma)^{2k}\int_0^{t/k}\int_\R \int_0^{t/k}\int_\R \dots\int_0^{t/k}\int_{B(0,\,t^{1/\alpha})} \\
&\prod_{i=1}^kp^2_{s_i}(z_{i-1},z_i)\,\d z_{k+1-i}\, \d s_{k+1-i}.
\end{aligned}
\end{equation*}
We will further restrict the spatial domain of integration by appropriately choosing the points $\{z_i\}_{i=1}^k$ such that
\begin{equation*}
z_i\in B(z_{i-1}, s_i^{1/\alpha})\cap B(0,\,t^{1/\alpha}).
\end{equation*}
Now since $|z_i-z_{i-1}|\leq s_i^{1/\alpha}$, we have $p_{s_i}(z_{i-1},z_i)\geq c_1s_i^{-1/\alpha}$.  For notational convenience, we set $\sA_i:=|B(z_{i-1}, s_i^{1/\alpha})\cap B(0,\,t^{1/\alpha})|$. We clearly have $|\sA_i|\geq c_2 s_i^{1/\alpha}$.
We now use the heat kernel estimates and the above to write
\begin{equation*}
\begin{aligned}
\int_{\R\times\R\times\dots\times B(0,\,t^{1/\alpha})}&\prod_{i=1}^k p^2_{s_i}(z_{i-1},z_i)\,\d z_i\\
&\geq \int_{\sA_1\times\sA_2\times\dots\times \sA_k}\prod_{i=1}^k p^2_{s_i}(z_{i-1},z_i)\,\d z_i\\
&\geq c_3^k\prod_{i=1}^k\frac{1}{s_i^{1/\alpha}}.
\end{aligned}
\end{equation*}
We therefore have 
\begin{equation*}
\begin{aligned}
\int_0^{t/k}\int_0^{t/k}\dots\int_0^{t/k}&c_3^k\prod_{i=1}^k\frac{1}{s_i^{1/\alpha}}\d s_k\,\d s_{k}\dots\,\d s_1\\
&\geq c_3^k \left(\frac{t}{k}\right)^{k(\alpha-1)/\alpha}.
\end{aligned}
\end{equation*}
Combining the above estimates, we obtain
\begin{equation*}
\begin{aligned}
\E|u_t(x)|^2\geq g_t^2+g_t^2\sum_{k=1}^\infty(c_3l_\sigma^2\lambda^2)^k\left(\frac{t}{k} \right)^{k(\alpha-1)/\alpha}.
\end{aligned}
\end{equation*}
We have thus proved the result.
\end{proof}

{\it Proof of Theorem \ref{theoremwhite}.}
Using Lemma \ref{sumlowerbd}, we have the first statement of the theorem. For the second part of theorem, we fix $x\in \R$. Clearly we have $x\in B(0,\,2|x|)$ and by the first part of the theorem, we have for $t^{1/\alpha}\geq 2|x|\vee T$, 
\begin{equation*}
\E|u_t(x)|^2\geq c \exp \left( c' \lambda^{2 \alpha/(\alpha - 1)}t \right).
\end{equation*}
By taking the appropriate limit, we obtain the second part of the theorem.
\qed

\begin{remark}
We now use Lemma \ref{integrals} to see how the proof of the above result simplifies when the initial function is assumed to be bounded below. We start with 
\begin{equation*}
\begin{aligned}
\E|u_t(&x)|^2\\
&\geq |(\sG u)_t(x)|^2\\
&+\sum_{k=1}^\infty (\lambda l_\sigma)^{2k}\int_0^t\int_{\R}\int_0^{s_1}\int_{\R}\dots\int_0^{s_{k-1}}\int_{\R} |(\sG u)_{s_k}(z_k)|^2\\
&\prod_{i=1}^kp^2_{s_{i-1}-s_i}(z_{i-1},z_i)\,\d z_{k+1-i} \, \d s_{k+1-i}.
\end{aligned}
\end{equation*}
Since the initial function is bounded below we will have $(\sG u)_{s_k}(z_k)\geq c_1$ for some constant $c_1$. We now look that following iterated integral 
\begin{equation*}
\int_0^t\cdots \int_0^{s_{k-1}} \int_\R\cdots\int_\R \prod_{i=1}^kp^2_{s_{i-1}-s_i}(z_{i-1},z_i)\,\d z_{k+1-i} \, \d s_{k+1-i}.
\end{equation*}
We now use the semigroup and Lemma \ref{integrals} property to reduce the above quantity to reduce
\begin{equation*}
\begin{aligned}
\int_0^t&\cdots\int_0^{s_{k-1}}\prod_{i=1}^kp_{s_{i-1}-s_i}(0,0)\d s_i\\
&\geq\int_0^t\cdots\int_0^{s_{k-1}}\prod_{i=1}^k\frac{1}{(s_{i-1}-s_i)^{1/\alpha}}\d s_i\\
&\geq c_1\left(\frac{t}{k}\right)^{k(\alpha-1)/\alpha}.
\end{aligned}
\end{equation*}
Combining all the above estimates together we obtain

\begin{equation*}
\begin{aligned}
\E|u_t(x)|^2\geq c_3\sum_{k=0}^\infty(c_3l_\sigma^2\lambda^2)^k\left(\frac{t}{k} \right)^{k(\alpha-1)/\alpha}.
\end{aligned}
\end{equation*}
We have included this to illustrate the fact that when the initial condition is bounded below, one can use the semigroup properties of the heat kernel and obtain a similar result. This also highlights the technical issues we run into when the initial condition is not bounded below.  

The above gives exponential bounds for the the second moment for $t>T$. What about for $t\in(0,T]$?  When the initial condition is a function which is bounded below, we quite easily get the required bound. As before, we have

\begin{equation*}
\begin{aligned}
\E|u_t(x)|^2&=(\sG u)_t(x)^2+\lambda^2\int_0^t\int_{\R}p^2_{t-s}(x-y)\E|\sigma(u_s(y))|^2\,\d y\,\d s\\
&=I_1+I_2.
\end{aligned}
\end{equation*}
Clearly $I_1\geq c_1$. For $I_2$, we have 
\begin{equation*}
\begin{aligned}
I_2&\geq \lambda^2l_\sigma^2\int_0^t\inf_{y\in \R}\E|u_s(y)|^2\int_{\R}p^2_{t-s}(x,\,y)\,\d y\,\d s\\
&\geq c_2\lambda^2l_\sigma^2\int_0^t\frac{\inf_{y\in \R}\E|u_s(y)|^2}{(t-s)^{1/\alpha}}\,\d s.
\end{aligned}
\end{equation*}
Putting these estimates together we have 
\begin{equation}\label{all times}
\inf_{x\in \R}\E|u_t(x)|^2\geq c_1+c_2\lambda^2l_\sigma^2\int_0^t\frac{\inf_{y\in \R}\E|u_s(y)|^2}{(t-s)^{1/\alpha}}\,\d s.
\end{equation}
If $t<T$, for some large $T$, the above inequality reduces to 
\begin{equation*}
\inf_{x\in \R}\E|u_t(x)|^2\geq c_1+\frac{c_2\lambda^2l_\sigma^2}{T^{1/\alpha}}\int_0^t \inf_{y\in \R}\E|u_s(y)|^2\d s,
\end{equation*}
which gives the required exponential bound. For $t>T$, \eqref{all times} together with Proposition \ref{renewalineq2} gives the required bound with the correct rate with respect of $\lambda$.  We have thus given two different ways of proving exponential bounds when the initial condition is bounded below.  These work mainly because we have explicit heat kernel estimates for the fractional Laplacian.  This was not the case in \cite{FK}.  We will make a similar remark concerning the coloured noise equation later.

\end{remark}

\section{Proofs of Theorems \ref{colourednoisetheorem}-\ref{continuity}.}
We start this section with the following estimate. Recall that $f$ denotes the correlation function of the coloured noise.
\begin{lemma} \label{IntegrlBound}
For any $t > 0$ and all $x,\, y \in \R^d$, there exists some positive constant $c_1$ such that
\begin{equation*}
\int_{\R^d \times \R^d} p_t(x, \, \omega) p_t(y, \, z) f(\omega, \, z) \d \omega \d z \leq \frac{c_1}{t^{\beta / \alpha}}.
\end{equation*}
\end{lemma}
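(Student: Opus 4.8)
The plan is to bound the double integral by splitting the Riesz kernel according to whether $|\omega - z|$ is small or large, and to use the upper bound in \eqref{estimatep} throughout. First I would write $f(\omega,z) = |\omega - z|^{-\beta}$ and insert the Gaussian-type upper bounds $p_t(x,\omega) \leq c_2\bigl(t^{-d/\alpha} \wedge t|x-\omega|^{-d-\alpha}\bigr)$ and likewise for $p_t(y,z)$. A clean way to organise the estimate is to first integrate out one variable: for fixed $\omega$, the inner integral $\int_{\R^d} p_t(y,z)|\omega - z|^{-\beta}\,\d z$ should be comparable to $t^{-\beta/\alpha}$ times a bounded factor, by a scaling argument. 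Indeed, substituting $z = y + t^{1/\alpha}\zeta$ turns $p_t(y,z)$ into $t^{-d/\alpha}$ times the stable density at $\zeta$, and $|\omega - z|^{-\beta}$ into $t^{-\beta/\alpha}|t^{-1/\alpha}(\omega - y) - \zeta|^{-\beta}$; so after this rescaling one is left with $t^{-\beta/\alpha}\int_{\R^d} \Phi(\zeta)\,|w - \zeta|^{-\beta}\,\d \zeta$ where $\Phi$ is the (bounded, integrable, polynomially decaying) stable density and $w = t^{-1/\alpha}(\omega-y)$. Since $\beta < d$, this convolution is bounded uniformly in $w$: near $\zeta = w$ the singularity $|w-\zeta|^{-\beta}$ is locally integrable, and away from it one uses boundedness and the tail decay of $\Phi$.

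With the inner integral bounded by $c\, t^{-\beta/\alpha}$ uniformly in $\omega$ and $y$, the outer integral $\int_{\R^d} p_t(x,\omega)\,\bigl(\text{inner}\bigr)\,\d\omega \leq c\,t^{-\beta/\alpha}\int_{\R^d} p_t(x,\omega)\,\d\omega = c\,t^{-\beta/\alpha}$, since $p_t$ is a probability density. That completes the bound. An alternative, if one prefers to avoid invoking fine properties of the stable density, is to work directly with the two-regime bound: split $\R^d \times \R^d$ into the region $\{|\omega - z| \leq t^{1/\alpha}\}$ and its complement. On the near-diagonal region one uses $p_t(x,\omega) \leq c_2 t^{-d/\alpha}$ and $p_t(y,z)\leq c_2 t^{-d/\alpha}$ together with $\int_{|\omega-z|\leq t^{1/\alpha}} |\omega-z|^{-\beta}\,\d z = c\, t^{(d-\beta)/\alpha}$ and then integrates the remaining free variable over a ball of radius $\sim t^{1/\alpha}$ (using that the mass of $p_t$ on complements of large balls is controlled); on the far region $|\omega - z|^{-\beta} \leq t^{-\beta/\alpha}$ and one integrates both heat kernels freely to get $t^{-\beta/\alpha}$.

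The main obstacle I anticipate is purely bookkeeping: making the scaling argument rigorous requires knowing that the stable density $\Phi = p_1$ is bounded and decays like $|\zeta|^{-d-\alpha}$ at infinity (which follows from \eqref{estimatep} with $t=1$), and then checking that $\int \Phi(\zeta)|w-\zeta|^{-\beta}\,\d\zeta$ is bounded uniformly in $w \in \R^d$ — splitting into $|w - \zeta| < 1$ and $|w-\zeta|\geq 1$ handles this, with the first piece controlled by $\|\Phi\|_\infty \int_{|v|<1}|v|^{-\beta}\,\d v < \infty$ because $\beta < d$, and the second by $\int \Phi = 1$. There is no deep difficulty; the constant $c_1$ is independent of $x,y$ precisely because all the $x,y$-dependence is absorbed into the translation-invariant convolution after rescaling. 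I would also remark that the hypothesis $\beta < d$ (already standing in the paper) is exactly what is needed for local integrability of the Riesz kernel, and that $\beta \le \alpha$ is not needed for this particular lemma.
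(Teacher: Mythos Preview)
Your argument is correct and is essentially the paper's approach: both reduce to showing that $\int_{\R^d} p_1(\zeta)\,|w-\zeta|^{-\beta}\,\d\zeta$ is bounded uniformly in $w$, which follows from $\beta<d$ and the boundedness and integrability of $p_1$. The only cosmetic difference is that the paper first collapses the two heat kernels into one via the semigroup identity $p_t * p_t = p_{2t}$ (writing the double integral as $\int_{\R^d} p_{2t}(\omega,\,x-y)\,f(\omega,0)\,\d\omega$) before rescaling, whereas you rescale the inner integral directly and then integrate out the remaining probability density; either ordering yields the same bound.
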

\begin{proof}
Since
\begin{equation*}
\int_{\R^d} \int_{\R^d}  p_t(x, \, \omega) p_t(y, \, z) f(\omega, \, z) \d \omega \d z \leq \int_{\R^d} p_{2t} (\omega, \, x-y) f(\omega, \, 0) \d \omega,
\end{equation*}
the scaling property of the heat kernel and a proper change of the variable prove the result.
\end{proof}

Set
\begin{equation}\label{sup}
F(t):= \sup_{x\in \R^d}\E|u_t(x)|^2,
\end{equation}
where $u_t$ denotes the unique solution to \eqref{main-eq}.  We then have the following.

\begin{proposition} \label{colourednoiseupperprop}
There exist constants $c_1$ and $c_2$ such that for all $t>0$, we have 
\begin{equation*}
F(t) \leq c_1+ c_2(\lambda L_\sigma)^2 \int_0^t \frac{F(s)}{(t -s)^{\beta / \alpha}} \d s.
\end{equation*}
\end{proposition}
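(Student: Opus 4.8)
The plan is to use the Walsh isometry for the mild solution \eqref{mild-main}, which gives the identity
\begin{equation*}
\E|u_t(x)|^2 = |(\sG u)_t(x)|^2 + \lambda^2 \int_0^t \int_{\R^d \times \R^d} p_{t-s}(x,\omega)\, p_{t-s}(x,z)\, \E[\sigma(u_s(\omega))\sigma(u_s(z))]\, f(\omega,z)\, \d\omega\, \d z\, \d s,
\end{equation*}
valid for the coloured noise with Riesz-kernel correlation $f$. First I would bound the first term: since $u_0$ is bounded, $(\sG u)_t(x) \leq \|u_0\|_\infty$ uniformly in $t$ and $x$, so $|(\sG u)_t(x)|^2 \leq c_1$. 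For the stochastic term, I would apply the upper growth condition $\sigma(x) \leq L_\sigma |x|$ together with Cauchy--Schwarz (in the form $\E[\sigma(u_s(\omega))\sigma(u_s(z))] \leq (\E|\sigma(u_s(\omega))|^2)^{1/2}(\E|\sigma(u_s(z))|^2)^{1/2} \leq L_\sigma^2 F(s)$, using the definition \eqref{sup}) to pull the supremum $F(s)$ out of the spatial integral.

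The key step is then to recognise that what remains of the spatial integral is exactly the quantity estimated in Lemma \ref{IntegrlBound}: after bounding $\E[\sigma(u_s(\omega))\sigma(u_s(z))]$ by $L_\sigma^2 F(s)$, the double spatial integral $\int_{\R^d\times\R^d} p_{t-s}(x,\omega) p_{t-s}(x,z) f(\omega,z)\,\d\omega\,\d z$ is bounded by $c/(t-s)^{\beta/\alpha}$ (this is Lemma \ref{IntegrlBound} applied with $x=y$ and $t$ replaced by $t-s$). Substituting this in gives
\begin{equation*}
\E|u_t(x)|^2 \leq c_1 + c(\lambda L_\sigma)^2 \int_0^t \frac{F(s)}{(t-s)^{\beta/\alpha}}\, \d s,
\end{equation*}
and since the right-hand side no longer depends on $x$, taking the supremum over $x \in \R^d$ on the left yields the claimed inequality for $F(t)$ with $c_2 = c$.

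One point that needs a little care is justifying the Cauchy--Schwarz bound on the off-diagonal second moment $\E[\sigma(u_s(\omega))\sigma(u_s(z))]$ and the resulting reduction to $F(s)$; this is routine given the integrability condition $\sup_{x}\sup_{t}\E|u_t(x)|^2 < \infty$ that defines the random field solution, which guarantees $F(s)$ is finite and the interchange of expectation and integral is legitimate (Walsh's isometry for the coloured noise case). The only genuine obstacle — and it is minor here since it has been isolated into the preliminaries — is the spatial-integral estimate, which is precisely why Lemma \ref{IntegrlBound} was proved first; with that in hand the proof is essentially a two-line assembly. I would also note that local integrability of $F$ (needed so that the renewal inequality of Proposition \ref{renewalineq} can later be applied) follows from the finiteness and the fact that $\beta/\alpha < 1$.
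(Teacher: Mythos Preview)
Your proposal is correct and follows essentially the same route as the paper: take second moment of the mild formulation, bound $|(\sG u)_t(x)|^2$ by a constant using boundedness of $u_0$, use the growth bound on $\sigma$ together with Cauchy--Schwarz (the paper calls it H\"older) to reduce the cross-moment to $L_\sigma^2 F(s)$, and then apply Lemma~\ref{IntegrlBound} to the remaining spatial integral before taking the supremum in $x$. Your additional remarks on integrability and on local integrability of $F$ are sound but not strictly needed for the proposition itself.
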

\begin{proof}

We start with the mild formulation given by (\ref{mild-main}), then take the second moment to obtain the following
\begin{equation*}
\begin{aligned}
\E|&u_t(x)|^2 =|(\sG u)_t(x)|^2 \\
&~+ \lambda^2 \int_0^t\int_{\R^d \times \R^d} p_{t-s}(x,\,y) p_{t-s} (x, \, z) f(y, \, z) \E[\sigma(u_s(y))\sigma(u_s(z))]\d y \, \d z \, \d s\\
&=I_1+I_2.
\end{aligned}
\end{equation*}  
We begin by looking at the first term $I_1$. Since $u_0(x)$ is bounded, we have $I_1\leq c_3$.  We now use the assumption on $\sigma$ together with H\"older's inequality to see that 

\begin{equation*}
\begin{aligned}
\E [\sigma(u_s(y))\sigma(u_s(z))]&\leq L_\sigma^2\E [|u_s(y) u_s(z)|]\\
&\leq [\E |u_s(y) |^2]^{1/2} [\E |u_s(z) |^2]^{1/2}\\
&\leq \sup_{x \in \R^d} \E |u_s(x) |^2
\end{aligned}
\end{equation*}
Therefore using the notation (\ref{sup}) as well as Lemma \ref{IntegrlBound}, the second term $I_2$ can be bounded as follows.
\begin{equation*}
\begin{aligned}
I_2\leq c_5(\lambda L_\sigma)^2\int_0^t\frac{F(s)}{(t-s)^{\beta/\alpha}}\,\d s.
\end{aligned}
\end{equation*}
Combining the above estimates, we obtain the required result.
\end{proof}
 Recall that
\begin{equation*}
g_t:=\inf_{x\in B(0,\,t^{1/\alpha})}(\sG u)_t(x).
\end{equation*}
We then have the following.
\begin{proposition} \label{colourednoiselowerprop}
Given any $x \in \R^d$, for all $t$ satisfying $x \in B(0,\,t^{1/\alpha})$, there exists some constant $c_1$ such that
\begin{equation*}
\E|u_t(x)|^2\geq g_t^2\sum_{k=0}^\infty (c_1\lambda l_\sigma)^{2k}\left(\frac{t}{k} \right)^{k(\alpha-\beta)/\alpha}
\end{equation*}
holds.
\end{proposition}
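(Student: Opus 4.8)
The idea is to mimic exactly the iteration carried out in the proof of Proposition \ref{whitenoiselowbd}, but now keeping the spatial correlation $f$ in place and using Lemma \ref{IntegrlBound} in its "lower bound" form rather than the one-dimensional heat kernel bound $\int_\R p_{t-s}^2(x,y)\,\d y\geq c(t-s)^{-1/\alpha}$. First I would start from Walsh's isometry applied to \eqref{mild-main}, which gives
\begin{equation*}
\E|u_t(x)|^2=|(\sG u)_t(x)|^2+\lambda^2\int_0^t\int_{\R^d\times\R^d}p_{t-s}(x,y)p_{t-s}(x,z)f(y,z)\,\E[\sigma(u_s(y))\sigma(u_s(z))]\,\d y\,\d z\,\d s,
\end{equation*}
and then use the lower bound $\sigma(w)\geq l_\sigma|w|$ together with the (less obvious) fact that $\E[|u_s(y)||u_s(z)|]\geq$ something we can iterate. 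This is the first place one has to be careful: unlike in the white-noise case, the product $\E[\sigma(u_s(y))\sigma(u_s(z))]$ is not simply bounded below by $l_\sigma^2$ times a single second moment. I would handle this by restricting the $y,z$ integration to a region where both points lie in a ball on which the relevant lower bounds hold, and use that on such a region $\E[|u_s(y)||u_s(z)|]\geq \big(\inf_{w\in B}\E|u_s(w)|^2\big)$ (or, more simply, iterate the inequality $\E[\sigma(u_s(y))\sigma(u_s(z))]\geq l_\sigma^2\E[|u_s(y)|\,|u_s(z)|]$ and drop cross terms by restricting to $y=z$-type diagonal contributions at the next stage — whichever is cleanest).

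Next I would iterate: substitute the mild formula for $u_s(y)$ and $u_s(z)$ back in, discard the nonnegative "initial" cross terms at each stage except the one we want to keep, and arrive at a series of the form
\begin{equation*}
\E|u_t(x)|^2\geq |(\sG u)_t(x)|^2+\sum_{k=1}^\infty(\lambda l_\sigma)^{2k}\int_{\Delta_k(t)}\int_{(\R^d)^{\otimes}}|(\sG u)_{s_k}(z_k)|^2\prod_{i=1}^k p_{s_{i-1}-s_i}(\cdot,\cdot)p_{s_{i-1}-s_i}(\cdot,\cdot)f(\cdot,\cdot)\,\d z\,\d s,
\end{equation*}
with $s_0=t$ and the appropriate convention on the base point. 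Then, exactly as in Proposition \ref{whitenoiselowbd}, I restrict all the $z_i$ to $B(0,t^{1/\alpha})$, pull out $g_t^2$, shrink each temporal increment $s_{i-1}-s_i$ to the interval $(0,t/k)$ and change variables so that the $k$ time integrations become independent each over $(0,t/k)$. For the spatial integrals I restrict $z_i\in B(z_{i-1},(s_{i-1}-s_i)^{1/\alpha})\cap B(0,t^{1/\alpha})$; on this set the heat kernels are comparable to $(s_{i-1}-s_i)^{-d/\alpha}$, and the key input is that
\begin{equation*}
\int_{\R^d\times\R^d}p_{s}(z_{i-1},y)p_{s}(z_{i-1},z)f(y,z)\,\d y\,\d z\geq \frac{c}{s^{\beta/\alpha}}
\end{equation*}
on the restricted region — this is the lower-bound companion of Lemma \ref{IntegrlBound}, obtained by the same scaling argument (one checks $\int p_{2s}(\omega,0)f(\omega,0)\,\d\omega\asymp s^{-\beta/\alpha}$ and that the restriction to the balls loses only a constant). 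This produces
\begin{equation*}
\E|u_t(x)|^2\geq g_t^2+g_t^2\sum_{k=1}^\infty (c\lambda l_\sigma)^{2k}\left(\int_0^{t/k}\frac{\d s}{s^{\beta/\alpha}}\right)^k=g_t^2\sum_{k=0}^\infty(c_1\lambda l_\sigma)^{2k}\left(\frac{t}{k}\right)^{k(\alpha-\beta)/\alpha},
\end{equation*}
since $\int_0^{t/k}s^{-\beta/\alpha}\,\d s=\frac{\alpha}{\alpha-\beta}(t/k)^{(\alpha-\beta)/\alpha}$ and $\beta<\alpha$ makes this finite; the constant $\frac{\alpha}{\alpha-\beta}$ is absorbed into $c_1$.

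The main obstacle I anticipate is the very first step: justifying that, after restricting the spatial domains, the correlated product $\E[\sigma(u_s(y))\sigma(u_s(z))]$ really can be bounded below by (a constant times) $g_s^2$ or $\inf_{w}\E|u_s(w)|^2$ in a way that survives the iteration — the naive bound $\E[|u_s(y)||u_s(z)|]\le \sup_w\E|u_s(w)|^2$ used for the upper bound goes the wrong way, so one needs either a Cauchy–Schwarz-free argument (e.g. restricting to $y$ near $z$ so that both are near a common ball and using that $u_s$ is nonnegative with a uniform lower bound on that ball) or a more careful bookkeeping of which terms are kept. Everything else — the combinatorics of the iteration, the temporal shrinking/change of variables, and the heat-kernel estimates — is a direct transcription of the white-noise proof with $1/\alpha$ replaced by $\beta/\alpha$ in the temporal exponent, and poses no genuine difficulty. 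Once the series bound is established, combining it with Lemma \ref{sumlowerbd} (taking $\rho=(\alpha-\beta)/\alpha$ and $b=c_1^2\lambda^2 l_\sigma^2 t$) will immediately yield the exponential lower bound $\tilde c\exp(\tilde c'\lambda^{2\alpha/(\alpha-\beta)}t)$ claimed in Theorem \ref{colourednoisetheorem}, but that step belongs to the proof of the theorem itself rather than to this proposition.
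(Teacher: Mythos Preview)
Your overall strategy---iterate the mild formulation, restrict spatial domains to pull out $g_t^2$, shrink each time increment to $(0,t/k)$, and collect a factor $s^{-\beta/\alpha}$ per step---is exactly the paper's. But there is a genuine gap in how you set up the iteration.

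In the coloured-noise case the second moment of the stochastic integral involves $\E[\sigma(u_s(y))\sigma(u_s(z))]$ at \emph{two different} spatial points. When you plug the mild formula back in for $u_s(y)$ and $u_s(z)$, the next layer has heat kernels $p_{s-r}(y,w_1)p_{s-r}(z,w_2)$ with \emph{different} base points $y\neq z$. Iterating therefore produces two parallel spatial chains $(z_i)$ and $(z_i')$, and the innermost term is $(\sG u)_{s_k}(z_k)(\sG u)_{s_k}(z_k')$, not $|(\sG u)_{s_k}(z_k)|^2$ as you wrote. Consequently your ``integrated lower bound''
\[
\int_{\R^d\times\R^d}p_s(z_{i-1},y)\,p_s(z_{i-1},z)\,f(y,z)\,\d y\,\d z\ \ge\ \frac{c}{s^{\beta/\alpha}},
\]
with both heat kernels rooted at the \emph{same} $z_{i-1}$, is not what arises at step $i\ge 2$: you would need the analogous bound with $p_s(z_{i-1},\cdot)p_s(z_{i-1}',\cdot)$, and that quantity depends on $|z_{i-1}-z_{i-1}'|$ in a way you have not controlled.

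The paper's fix is precisely the one you gesture at in your last paragraph: carry both chains explicitly and force them to stay close. Concretely, restrict
\[
z_i\in B\!\left(x,\tfrac12 s_1^{1/\alpha}\right)\cap B(z_{i-1},s_i^{1/\alpha}),\qquad z_i'\in B\!\left(x,\tfrac12 s_1^{1/\alpha}\right)\cap B(z_{i-1}',s_i^{1/\alpha}).
\]
This guarantees $|z_i-z_i'|\le s_1^{1/\alpha}$, so $f(z_i,z_i')$ admits a pointwise lower bound, while the heat-kernel lower bounds $p_{s_i}\ge c\,s_i^{-d/\alpha}$ and the volume lower bounds $|B\cap B|\ge c\,s_i^{d/\alpha}$ cancel one another. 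What survives is exactly the $s^{-\beta/\alpha}$ contribution per step, and the temporal integration gives $(t/k)^{k(\alpha-\beta)/\alpha}$ as you anticipated. So your diagnosis of the obstacle was right; what was missing was the explicit two-chain bookkeeping and the common-ball anchoring that makes the Riesz-kernel lower bound usable at every stage of the recursion.
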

\begin{proof}
We start off with the mild formulation of the solution, take the second moment and use the lower bound on $\sigma$ to end up with
\begin{equation*}
\begin{aligned}
\E|u_t(&x)|^2\\
&\geq |(\sG u)_t(x)|^2+\lambda^2l_\sigma^2\int_0^t\int_{\R^d}\int_{\R^d}\\
& p_{t-s_1}(x,\,z_1)p_{t-s_1}(x,\,z_1')f(z_1,\,z_1')\E|u_{s_1}(z_1)u_{s_1}(z_1')|\,\d z_1\,\d z'_1 \, \d s_1.
\end{aligned}
\end{equation*}
We now have 
\begin{equation*}
\begin{aligned}
\E|u_{s_1}&(z_1)u_{s_1}(z_1')|\\
&\geq \E u_{s_1}(z_1)u_{s_1}(z_1')\\
&\geq (\sG u)_{s_1}(z_1)(\sG u)_{s_1}(z_1')+\lambda^2l_\sigma^2\int_0^{s_1} \int_{\R^d}\int_{\R^d}\\
& p_{s_1-s_2}(z_1,\,z_2)p_{s_1-s_2}(z_1',\,z_2')f(z_2,\,z_2')\E|u_{s_2}(z_2)u_{s_2}(z_2')|\,\d z_2\,\d z_2'\,\d s_2.
\end{aligned}
\end{equation*}
Applying the above recursively, we end up with the following
\begin{equation*}
\begin{aligned}
\E|u_t(&x)|^2\\
&\geq |(\sG u)_t(x)|^2+\sum_{k=1}^\infty (\lambda l_\sigma)^{2k}\int_0^t\int_{\R^d\times \R^d}\int_0^{s_1}\int_{\R^d\times \R^d}\dots \int_0^{s_{k-1}}\int_{\R^d\times \R^d}\\
& \bigg((\sG u)_{s_k}(z_k)(\sG u)_{s_k}(z'_k)\\
&\times \prod_{i=1}^kp_{s_{i-1}-s_i}(z_{i-1},\,z_i)p_{t-s}(z'_{i-1},\,z_i')f(z_i,\,z_i')\bigg)\,\d z_{k+1-i}\,\d z_{k+1-i}'\,\d s_{k+1-i}.
\end{aligned}
\end{equation*}
We now reduce the spatial domain of integration to end up with 
\begin{equation*}
\begin{aligned}
\E|u_t(&x)|^2\\
&\geq g_t^2+g_t^2\sum_{k=1}^\infty (\lambda l_\sigma)^{2k}\int_0^t\int_{\R^d\times\R^d}\int_0^{s_1}\int_{\R^d\times \R^d}\dots \int_0^{s_{k-1}}\int_{B(0,\,t^{1/\alpha})\times B(0,\,t^{1/\alpha})}\\
&\prod_{i=1}^kp_{s_{i-1}-s_i}(z_{i-1},\,z_i)p_{s_{i-1}-s_i}(z'_{i-1},\,z_i')f(z_i,\,z_i')\,\d z_{k+1-i}\,\d z_{k+1-i}'\,\d s_{k+1-i}.
\end{aligned}
\end{equation*}
As in the proof of Proposition \ref{whitenoiselowbd}, we reduce the temporal domain of integration and make an appropriate change of variable to end up with
\begin{equation*}
\begin{aligned}
\E|u_t(&x)|^2\\
&\geq g_t^2+g_t^2\sum_{k=1}^\infty (\lambda l_\sigma)^{2k}\int_0^{t/k}\int_{\R^d\times\R^d}\dots \int_0^{t/k}\int_{B(0,\,t^{1/\alpha})\times B(0,\,t^{1/\alpha})}\\
&\prod_{i=1}^kp_{s_i}(z_{i-1},\,z_i)p_{s_i}(z'_{i-1},\,z_i')f(z_i,\,z_i')\,\d z_{k+1-i}\,\d z_{k+1-i}'\,\d s_{k+1-i}.
\end{aligned}
\end{equation*}
Recall that $x\in B(0,\,t^{1/\alpha})$ and consider

\begin{equation*}
z_i\in B(x,\,s_1^{1/\alpha}/2)\cap B(z_{i-1}, s_i^{1/\alpha}),
\end{equation*}
and
\begin{equation*}
z_i'\in B(x,\,s_1^{1/\alpha}/2)\cap B(z_{i-1}', s_i^{1/\alpha}).
\end{equation*}
These imply that $|z_i-z_i'|\leq s_1^{1/\alpha}$ which gives $f(z_i, z_i')\geq s_1^{-\beta/\alpha}$. We also have $|z_i-z_{i-1}|\leq s_i^{1/\alpha}$ and $|z'_i-z'_{i-1}|\leq s_i^{1/\alpha}$ which imply that $p(s_i, z_{i-1},\,z_i)\geq c_1s_i^{-d/\alpha}$ and $p(s_i, z_{i-1},\,z_i)\geq c_1s_i^{-d/\alpha}$.  In other words, we are looking at the points $\{s_i,\,z_i,\,z_i' \}_{i=0}^k$ such that the following holds
\begin{equation*}
\prod_{i=1}^kp(s_i, z_{i-1},\,z_i)p(s_i, z'_{i-1},\,z'_i)f(z_i, z_i')\geq c_1^{2k}\prod_{i=1}^k\frac{1}{s_i^{2d/\alpha}s_1^{\beta/\alpha}}.
\end{equation*}

Now we have that $|B(x,\,s_1^{1/\alpha}/2)\cap B(z_{i-1}, s_i^{1/\alpha})|\geq c_2s_i^{d/\alpha}$, for some constant $c_2$.  For notational convenience, we set $\sA_i:=\{z_i\in B(x,\,s_1^{1/\alpha}/2)\cap B(z_{i-1}, s_i^{1/\alpha})\}$ and $\sA_i':=\{z_i'\in B(x,\,s_1^{1/\alpha}/2)\cap B(z'_{i-1}, s_i^{1/\alpha})\}$.

\begin{equation*}
\begin{aligned}
\int_0^{t/k}&\int_{\R^d \times \R^d}\dots \int_0^{t/k}\int_{B(0,\,t^{1/\alpha})\times B(0,\,t^{1/\alpha})}\\
&\prod_{i=1}^kp_{s_i}(z_{i-1},\,z_i)p_{s_i}(z'_{i-1},\,z_i')f(z_i,\,z_i')\,\d z_{k+1-i}\,\d z_{k+1-i}'\,\d s_{k+1-i}\\
&\geq \int_0^{t/k}\int_{\sA_1}\int_{\sA'_1}\dots\int_0^{t/k}\int_{\sA_k}\int_{\sA'_k}\\
&\prod_{i=1}^kp_{s_i}(z_{i-1},\,z_i)p_{s_i}(z'_{i-1},\,z_i')f(z_i,\,z_i')\,\d z_{k+1-i}\,\d z_{k+1-i}'\,\d s_{k+1-i}\\
&\geq c_2^k\int_0^{t/k}\dots\int_0^{t/k} \frac{1}{s_i^{k\beta/\alpha}}\,\d s_k\,\d s_{k-1}\dots \d s_1\\
&=c_2^k\left(\frac{t}{k} \right)^{k(\alpha-\beta)/\alpha}.
\end{aligned}
\end{equation*}
We now combine the above estimates to obtain,
\begin{equation*}
\E|u_t(x)|^2\geq g_t^2+g_t^2\sum_{k=1}^\infty (c_3\lambda l_\sigma)^{2k}\left(\frac{t}{k}\right)^{k(\alpha-\beta)/\alpha},
\end{equation*}
which proves the result.
\end{proof}
\noindent
Now we are ready to prove Theorem \ref{colourednoisetheorem}
\medskip \par \noindent
{\it Proof of Theorem \ref{colourednoisetheorem}.}
We prove the upper bound first. But this is an immediate consequence of Propositions \ref{colourednoiseupperprop} and  \ref{renewalineq} with $\rho = (\alpha - \beta)/ \alpha$ and $\kappa = (\lambda L_{\sigma})^2$. We now turn our attention to the lower bound. We note that from Proposition \ref{estimatePDEpart}, we have that $g_t\geq c_1t^{-d/\alpha}$ for $t>T$, where $T$ is some positive constant. We can use this together with Proposition \ref{colourednoiselowerprop} to write
\begin{equation*}
\begin{aligned}
\E|u_t(x)|^2&\geq g_t^2\sum_{k=0}^\infty (c_1\lambda l_\sigma)^{2k}\left(\frac{t}{k} \right)^{k(\alpha-\beta)/\alpha} \\
&\geq t^{-2d/\alpha} \sum_{k=0}^\infty (c_1\lambda l_\sigma)^{2k}\left(\frac{t}{k} \right)^{k(\alpha-\beta)/\alpha}.
\end{aligned}
\end{equation*}
By taking $T$ large enough and using Lemma \ref{sumlowerbd}, we obtain
\begin{equation*}
\E|u_t(x)|^2\geq  c_2\exp \left( c'_2 \lambda^{2 \alpha/(\alpha - \beta)}t \right)\quad\text{for all}\quad t>T,
\end{equation*}
for some constants $c_2$ and $c_2'$. The final part of the theorem follows easily.
\qed \\

\begin{remark}\label{correlated}
For the purpose of this remark, we will assume that the initial condition is bounded below. We will again show that one can get the exponential growth by using heat kernel estimates and Proposition \ref{renewalineq2}. Since we now have coloured noise, we need to develop a slightly different strategy. We seek to find a renewal inequality for the function $G(t)$ defined below. We first make the observation that there exists a positive constant $c_1$ such that 
\begin{equation*}
\int_{\R^d\times \R^d}p_{t-s}(x, w_1)p_{t-s}(x, w_2)f(w_1,w_2)\,\d w_1\,\d w_2\geq \frac{c_1}{(t-s)^{\beta/\alpha}}.
\end{equation*} 
We now set
\begin{equation*}
G(t):=\inf_{x,\,y\in \R^d}\E|u_t(x)u_t(y)|,
\end{equation*}
and use the mild formulation of the solution to obtain 
\begin{equation*}
G(t)\geq c_2+c_3\lambda^2\int_0^t\frac{G(s)}{(t-s)^{\beta/\alpha}}\,\d s.
\end{equation*}
It should be clear that we have used, in an essential way, the fact that the initial condition is bounded below. Now an application of Proposition \ref{renewalineq2} gives us $G(t)\geq c_4\exp \left( c_5 \lambda^{2 \alpha/ (\alpha - \beta)} t \right)$. This essentially shows that second moment grows exponentially as time gets large, which is what we wanted to prove.  The point of this remark is to show that in our setting, one can significantly simplify the proof of the exponential growth in \cite{FK2}, where Fourier techniques were used instead of heat kernel estimates. We have of course used a very specific correlation function, but it seems that this argument will work for a much larger class of correlation functions as well. To be more specific, in \cite {HuNuaSo}, the authors considered noises satisfying, 
\begin{equation}\label{assump}
\int_{\R^d\times \R^d}p_{t}(x, w_1)p_{t}(x, w_2)f(w_1,w_2)\,\d w_1\,\d w_2\geq \frac{c_1}{t^{\gamma}},
\end{equation} 
where $\gamma$ is some positive number. The above will apply to this wide class of noises as well. In fact,  since \eqref{assump} involves the heat kernel as well as the correlation function, we could take it as a condition on both the operator and the noise term. This will simplify the arguments of \cite{FK2} for an even greater class of equations than the one considered here. 
\end{remark}

{\it Proof of Theorem \ref{colourednoiselambdacoro}.}
From the upper bound in Theorem \ref{colourednoisetheorem}, we have that for any $x \in \R^d$
\begin{equation*}
\E |u_t(x)|^2 \leq c \exp \left( c' \lambda^{2 \alpha/ (\alpha - \beta)} t \right)\quad\text{for\,\,all}\quad t>0,
\end{equation*}
from which we easily have
\begin{equation*}
\limsup_{\lambda\rightarrow \infty}\frac{\log \log \E|u_t(x)|^2}{\log \lambda}\leq \frac{2\alpha}{\alpha-\beta}.
\end{equation*}

We will seek a converse to the above inequality. Fix $x\in \R^d$, if $t$ is already large enough so that $x\in B(0,\,t^{1/\alpha})$, then by Proposition \ref{colourednoiselowerprop} we have 
\begin{equation*}
\E|u_t(x)|^2\geq g_t^2\sum_{k=0}^\infty (c_3l_\sigma \lambda)^{2k}\left(\frac{t}{k}\right)^{k(\alpha-\beta)/\alpha},
\end{equation*}
which together with Lemma \ref{sumlowerbd} gives, 
\begin{equation*}
\liminf_{\lambda\rightarrow \infty}\frac{\log \log E|u_t(x)|^2}{\log \lambda}\geq \frac{2\alpha}{\alpha-\beta}.
\end{equation*}
Now if $x\notin B(0,\,t^{1/\alpha})$, we can choose a constant $\kappa>0$ so that $x\in B(0,\,\kappa t^{1/\alpha})$, we can use the ideas of Proposition \ref{colourednoiselowerprop} to end up with 
\begin{equation*}
\E|u_t(x)|^2\geq g_{\kappa t}^2\sum_{k=0}^\infty (c_4l_\sigma \lambda)^{2k}\left(\frac{t}{k}\right)^{k(\alpha-\beta)/\alpha}
\end{equation*}
and the result follows easily from that.
\qed

{\it Proof of Theorem \ref{excitation-index}.}
We begin by estimating an upper bound on $e(t)$. We start with the mild solution and take the second moment to obtain 
\begin{equation*}
\begin{aligned}
\sE_t(\lambda)^2&\leq \int_{\R^d}\left|\int_{\R^d}p_t(x,\,y)u_0(y)\,\d y\right|^2 \d x\\
&+ (\lambda L_\sigma)^2 \int_{\R^d}\int_0^t\int_{\R^d\times \R^d}p_{t-s}(x,\,y_1)p_{t-s}(x,\,y_2)f(y_1,\,y_2)\E[|u_s(y_1)u_tsy_2)|]\,\d y_1\d y_2\d s \d x \\
&=I_1+I_2.
\end{aligned}
\end{equation*}
Clearly $I_1\leq c_1$. We need to find a lower bound on $I_2$.
\begin{equation*}
\begin{aligned}
I_2&\leq \int_0^t\int_{\R^d\times \R^d}p_{2(t-s)}(y_1,\,y_2)f(y_1,\,y_2)\E[|u_sy_1)u_s(y_2)|]\,\d y_1\d y_2\d s\\
&\leq \int_0^t\sup_{y\in \R^d}\E|u_s(y)|^2\int_{\R^d\times \R^d}p_{2(t-s)}(y_1,\,y_2)f(y_1,\,y_2)\d y_1\d y_2\d s\\
&\leq c_2\int_0^t\frac{1}{(t-s)^{\beta/\alpha}}e^{c_3\lambda^{2\alpha/(\alpha-\beta)}s}\,\d s\\
&\leq c_4 e^{c_5\lambda^{2\alpha/(\alpha-\beta)}t}.
\end{aligned}
\end{equation*}
We therefore have 
\begin{equation*}
\limsup_{\lambda\rightarrow \infty}\frac{\log \log \sE(t)}{\log \lambda}\leq \frac{2\alpha}{\alpha-\beta}.
\end{equation*}
We now seek a lower bound on $e(t)$. As in the proof of Proposition \ref{colourednoiselowerprop}, we have 
\begin{equation*}
\begin{aligned}
\E|u_t(&x)|^2\\
&\geq |(\sG u)_t(x)|^2+\sum_{k=1}^\infty (\lambda l_\sigma)^{2k}\int_0^t\int_{\R^d\times \R^d}\int_0^{s_1}\int_{\R^d\times \R^d}\dots \int_0^{s_{k-1}}\int_{\R^d\times \R^d}\\
&\bigg((\sG u)_{s_k}(z_k)(\sG u)_{s_k}(z'_k)\\
&\times \prod_{i=1}^kp_{s_{i-1}-s_i}(z_{i-1},\,z_i)p_{t-s}(z'_{i-1},\,z_i')f(z_i,\,z_i')\bigg) \,\d z_{k+1-i}\,\d z_{k+1-i}' \,\d s_{k+1-i}.
\end{aligned}
\end{equation*}
We now integrate both sides with respect to $x$ and use the techniques employed in the proof of Proposition \ref{colourednoiselowerprop} to obtain 
\begin{equation*}
\sE(t)^2\geq c_1+c_2\sum_{k=1}^\infty (c_3\lambda l_\sigma)^{2k}\left(\frac{t}{k}\right)^{k(\alpha-\beta)/\alpha}.
\end{equation*}
This together with Lemma \ref{sumlowerbd} yields 
\begin{equation*}
\liminf_{\lambda\rightarrow \infty}\frac{\log \log \sE(t)}{\log \lambda}\geq \frac{2\alpha}{\alpha-\beta}.
\end{equation*}
This proves the theorem.
\qed

{\it Proof of Theorem \ref{continuity}.}
As usual, the proof makes use of Kolmogorov's continuity theorem. We will therefore look at the increment $\E|u_{t+h}(x)-u_t(x)|^p$ for $h\in(0,\,1)$ and $p\geq 2$. We have 

\begin{equation*}
\begin{aligned}
u_{t+h}(x)&-u_t(x)=\int_{\R^d}[p_{t+h}(x,\,y)-p_t(x,\,y)]u_0(y)\,\,dy\\
&+\lambda\int_0^t\int_{\R^d}[p_{t+h-s}(x,\,y)-p_{t-s}(x,\,y)]\sigma(u_s(y))\,F(\d y\,\d s)\\
&+\lambda\int_t^{t+h}\int_{\R^d}p_{t+h-s}(x,\,y)\sigma(u_s(y))\,F(\d y\,\d s).
\end{aligned}
\end{equation*}
Since $(\sG u)_t(x)$ is in fact smooth for $t>0$, we  will look at higher moments of the remaining terms. Recall that $\sup_{x\in \R^d}\E|u_t(x)|^p$ is finite for all $t > 0$. We therefore use Burkholder's inequality together with Proposition \ref{increment} to write
\begin{equation*}
\begin{aligned}
\E \big|\int_0^t\int_{\R^d}[p_{t+h-s}(x,\,y)&-p_{t-s}(x,\,y)]\sigma(u_s(y))\,F(\d y\,\d s)\big|^p\\
&\leq c_1\big|\int_0^t\int_{\R^d} |\hat{p}_{t-s+h}(x-\xi)-\hat{p}_{t-s}(x-\xi)|^2\frac{1}{|\xi|^{d-\beta}}\,\d \xi \,\d s\big|^{p/2}\\
&\leq c_1h^{pq}.
\end{aligned}
\end{equation*}
Similarly we have 
\begin{equation*}
\begin{aligned}
\E \big| \int_t^{t+h}\int_{\R^d}p_{t+h-s}(x,\,y)&\sigma(u_s(y))\,F(\d y\,\d s) \big|^p\\
&\leq c_2\big|\int_t^{t+h}\int_{\R^d}p_{t+h-s}(x,\,y)p_{t+h-s}(x,\,z)f(y,\,z) \d y\,\d s\big|^{p/2}\\
&\leq c_3h^{(\alpha-\beta)p/2\alpha}.
\end{aligned}
\end{equation*}
We recall that $q\leq \frac{(\alpha-\beta)}{2\alpha}$ and combine the estimates above we see that 
\begin{equation*}
\E|u_{t+h}(x)-u_t(x)|^p\leq c_2h^{pq}.
\end{equation*}
Now an application of Kolmogorov's continuity theorem as in \cite{BEM2010a} completes the proof. 
\qed

\section{An extension.}
The initial conditions we have dealt with so far are functions that are non-negative on a set of positive measure. In fact, we can also deal with more general initial conditions. The only issue to achieve this extension is the existence and uniqueness of the random field solution. We will use a result of \cite{CoJoKhoShiu}, where this issue was settled whenever $u_0$ is any finite initial measure and the driving noise is white.  We have the following theorem which gives lower bounds only. The upper bound follows easily from the methods used in the previous parts of the paper. We will only briefly sketch the proof of the theorem.

\begin{theorem}
Let $u_t$ denote the unique solution to the following stochastic heat equation,
\begin{equation*}
\frac{\partial u_t(x)}{\partial t} = -(-\Delta)^{\alpha/2} u_t(x) + \lambda \sigma (u_t(x)) \dot w(t,x) \quad \text{for} ~ x \in \R ~\text{and} ~ t > 0, 
\end{equation*}
where the initial condition $u_0$ is a finite measure with $\int_{K}u_0(x)\,\d x>0$ with $K\subset \R$.  All other conditions are as described in the introduction.  Then, there exists a $T>0$ such that for $t>T$, we have 
\begin{equation*}
\inf_{x \in B(0, t^{1/ \alpha})} \E |u_t(x)|^2 \geq c \exp \left( c' \lambda^{2 \alpha/(\alpha - 1)}t \right), 
\end{equation*}
where $c$ and $c'$ are some positive constants. This immediately yields
\begin{equation*}
\liminf_{t\rightarrow \infty}\frac{1}{t}\log \E |u_t(x)|^2 \geq c' \lambda^{2 \alpha/(\alpha - 1)},
\end{equation*}
for any fixed $x\in \R$.
\end{theorem}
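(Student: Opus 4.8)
The plan is to adapt the proof of Theorem \ref{theoremwhite} to the setting of a finite initial measure, the only genuine new ingredient being a replacement for Proposition \ref{estimatePDEpart}, since the mild solution is now defined via the measure-valued analogue of $(\sG u)_t(x)$, namely $(\sG u)_t(x):=\int_{\R} p_t(x,\,y)\,u_0(\d y)$. First I would invoke \cite{CoJoKhoShiu} to guarantee that the random field solution exists and is unique, so that the Walsh isometry and the mild formulation \eqref{mild-main} remain valid with the measure $u_0(\d y)$ in place of $u_0(y)\,\d y$. Then I would establish a polynomial lower bound of the form $g_t:=\inf_{x\in B(0,\,t^{1/\alpha})}(\sG u)_t(x)\geq c\,t^{-1/\alpha}$ for $t>T$: writing $(\sG u)_t(x)\geq \int_{K} p_t(x,\,y)\,u_0(\d y)$ and using the heat kernel lower bound \eqref{estimatep}, for $x\in B(0,\,t^{1/\alpha})$ and $t$ large enough that $K\subset B(x,\,t^{1/\alpha}/2)$ we get $p_t(x,\,y)\geq c_1 t^{-1/\alpha}$ uniformly for $y\in K$, whence $(\sG u)_t(x)\geq c_1 t^{-1/\alpha}\,u_0(K)$ with $u_0(K)>0$ by hypothesis. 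This is the measure-valued version of Proposition \ref{estimatePDEpart}.

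With that in hand, the rest of the argument is a verbatim repetition of the white-noise analysis. I would run the iteration in the proof of Proposition \ref{whitenoiselowbd}: starting from
\begin{equation*}
\E|u_t(x)|^2\geq |(\sG u)_t(x)|^2+l_\sigma^2\lambda^2\int_0^t\int_{\R}p_{t-s}^2(x,\,y)\E|u_s(y)|^2\,\d s\,\d y,
\end{equation*}
expand recursively, restrict all intermediate spatial variables $z_i$ to $B(0,\,t^{1/\alpha})$, shrink each temporal integration to an interval of length $t/k$, and use the heat kernel estimates exactly as before to obtain
\begin{equation*}
\E|u_t(x)|^2\geq g_t^2\sum_{k=0}^\infty \left(c_3 l_\sigma^2\lambda^2\right)^k\left(\frac{t}{k}\right)^{k(\alpha-1)/\alpha}
\end{equation*}
for all $x\in B(0,\,t^{1/\alpha})$. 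Note that the initial measure enters this chain only through $g_t$, so no change is needed in the iteration itself; only the input $g_t\geq c\,t^{-1/\alpha}$ for $t>T$ has to be fed in at the end.

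Finally I would combine the polynomial lower bound on $g_t$ with Lemma \ref{sumlowerbd} (applied with $\rho=1/\alpha$, wait---with $\rho=(\alpha-1)/\alpha$ and $b=c_3 l_\sigma^2\lambda^2 (t/1)^{\dots}$, exactly as in the proof of Theorem \ref{colourednoisetheorem}): for $t>T$ with $T$ enlarged so that $b\geq (e/\rho)^\rho$, the sum is bounded below by $\exp(c\, b^{1/\rho})$, which after absorbing the polynomial prefactor $g_t^2\geq c\,t^{-2/\alpha}$ gives
\begin{equation*}
\inf_{x\in B(0,\,t^{1/\alpha})}\E|u_t(x)|^2\geq c\exp\left(c'\lambda^{2\alpha/(\alpha-1)}t\right)\quad\text{for }t>T.
\end{equation*}
The $\liminf$ statement for fixed $x\in\R$ then follows exactly as in the proof of Theorem \ref{theoremwhite}, by observing $x\in B(0,\,2|x|)$ and choosing $t$ large enough that $t^{1/\alpha}\geq 2|x|\vee T$. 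The main obstacle---and really the only place where care is needed---is the measure-valued heat-kernel estimate $g_t\geq c\,t^{-1/\alpha}$: one must check that the positivity hypothesis $\int_K u_0(\d x)>0$ together with the Gaussian-type lower bound \eqref{estimatep} genuinely survives when $u_0$ is a measure rather than a bounded function, and that $B(x,\,t^{1/\alpha}/2)\cap B(0,\,t^{1/\alpha})$ eventually contains $K$ uniformly in $x\in B(0,\,t^{1/\alpha})$ once $t>T$; both are straightforward but are the crux of the extension.
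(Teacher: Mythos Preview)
Your proposal is correct and follows exactly the route the paper takes: invoke \cite{CoJoKhoShiu} for well-posedness with measure initial data, observe that the proof of Proposition \ref{estimatePDEpart} goes through verbatim for $(\sG u)_t(x)=\int_{\R}p_t(x,y)\,u_0(\d y)$ to give $g_t\geq c\,t^{-1/\alpha}$ for $t>T$, and then repeat the iteration of Proposition \ref{whitenoiselowbd} together with Lemma \ref{sumlowerbd} unchanged. The paper's own proof is in fact only a terse sketch of precisely these steps, so your write-up is if anything more explicit than the original; the one point to tidy is that $K\subset B(x,\,t^{1/\alpha}/2)$ need not hold uniformly for $x$ near the boundary of $B(0,\,t^{1/\alpha})$, but replacing $t^{1/\alpha}/2$ by $2t^{1/\alpha}$ (the heat kernel lower bound \eqref{estimatep} still gives order $t^{-1/\alpha}$ there) fixes this immediately.
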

\begin{proof}
Recall that by Walsh's isometry, we have 
\begin{equation*}
\E|u_t(x)|^2=|(\sG u)_t(x)|^2+\lambda^2\int_0^t\int_\R p^2_{t-s}(x-y)\E|\sigma(u_s(y))|^2\,\d y \, \d s.
\end{equation*} 
See \cite{CoJoKhoShiu} for a justification of the preceding inequality.  As before we need to find a suitable lower bound on the first term. But with the current assumption on the initial condition $u_0$, the proof of Proposition \ref{estimatePDEpart} goes through and we have $(\sG u)_t(x)\geq \frac{c_1}{t^{1/\alpha}}$. We can now use the same argument as in the previous part of the paper to prove our result.
\end{proof}

One can also easily adapt the proofs in \cite{CoJoKhoShiu} to show the existence and uniqueness of the coloured noise driven equation \eqref{main-eq} when the initial condition is a finite measure. Hence all our results should hold in this case as well. 

\bibliography{Foon}

\def\cprime{$'$} \def\polhk#1{\setbox0=\hbox{#1}{\ooalign{\hidewidth
  \lower1.5ex\hbox{`}\hidewidth\crcr\unhbox0}}}
  \def\polhk#1{\setbox0=\hbox{#1}{\ooalign{\hidewidth
  \lower1.5ex\hbox{`}\hidewidth\crcr\unhbox0}}} \def\cprime{$'$}
\begin{thebibliography}{10}

\bibitem{BEM2010a}
Lahcen Boulanba, M'hamed Eddahbi, and Mohamed Mellouk.
\newblock Fractional {SPDE}s driven by spatially correlated noise: existence of
  the solution and smoothness of its density.
\newblock {\em Osaka J. Math.}, 47(1):41--65, 2010.

\bibitem{Chen-Dalang}
Le~Chen and Robert Dalang.
\newblock Moments, intermittency and growth indices for the nonlinear
  fractional heat equations.
\newblock {\em preprint}.

\bibitem{CoJoKhoShiu}
Daniel Conus, Mathew Joseph, Davar Khoshnevisan, and Shang-Yuan Shiu.
\newblock Initial measures for the stochastic heat equation.
\newblock {\em Annales de L'Institut Henri Poincare, Probabilites et
  Statistiques}, 50(1):136--153, 2014.

\bibitem{CoKh}
Daniel Conus and Davar Khoshnevisan.
\newblock On the existence and position of the farthest peaks of a family of
  stochastic heat and wave equations.
\newblock {\em Probab. Theory Related Fields}, 2010.

\bibitem{ferrante}
Marco Ferrante and Marta Sanz-Sol{{\'e}}.
\newblock S{PDE}s with coloured noise: analytic and stochastic approaches.
\newblock {\em ESAIM Probab. Stat.}, 10:380--405 (electronic), 2006.

\bibitem{foonjose}
Mohammud Foondun and Mathew Joseph.
\newblock Remarks on non-linear noise excitability of some stochastic
  equations.
\newblock {\em Stochastic Process. Appl.}, to appear.

\bibitem{FK}
Mohammud Foondun and Davar Khoshnevisan.
\newblock Intermittence and nonlinear parabolic stochastic partial differential
  equations.
\newblock {\em Electron. J. Probab.}, 14:no. 21, 548--568, 2009.

\bibitem{FK2}
Mohammud Foondun and Davar Khoshnevisan.
\newblock On the stochastic heat equation with spatially-colored random
  forcing.
\newblock {\em Trans. Amer. Math. Soc.}, (365):409--458, 2013.

\bibitem{FoondunTianLiu}
Mohammud Foondun, Wei Liu, and Kuanhou Tian.
\newblock On some properties of a class of fractional stochastic heat
  equations.
\newblock {\em preprint}, 2014.

\bibitem{FoonNual}
Mohammud Foondun and Eulalia Nualart.
\newblock On the behaviour of stochastic heat equations on bounded domains.
\newblock {\em in preparation}.

\bibitem{Henry}
Daniel Henry.
\newblock {\em Geometric theory of semilinear parabolic equations}, volume 840
  of {\em Lecture Notes in Mathematics}.
\newblock Springer-Verlag, Berlin, 1981.

\bibitem{HuNuaSo}
Yaozhong Hu, David Nualart, and Jian Song.
\newblock A nonlinear stochastic heat equation: H{\"o}lder continuity and
  smoothness of the density of the solution.
\newblock {\em Stochastic Process. Appl..}, 123(3):1083--1103, 2013.

\bibitem{Davar}
Davar Khoshnevisan.
\newblock Analysis of stochastic partial differential equations.
\newblock {\em CBMS Regional Conf. Ser. in Math.}, 2014.

\bibitem{Khoshnevisan:2013ab}
Davar Khoshnevisan and Kunwoo Kim.
\newblock Non-linear noise excitation and intermittency under high disorder.
\newblock {\em Proc. AMS}, 2013.

\bibitem{Khoshnevisan:2013aa}
Davar Khoshnevisan and Kunwoo Kim.
\newblock Non-linear noise excitation of intermittent stochastic pdes and the
  topology of lca groups.
\newblock {\em Ann. Probab.}, to appear, 2014.

\bibitem{Walsh}
John~B. Walsh.
\newblock An {I}ntroduction to {S}tochastic {P}artial {D}ifferential
  {E}quations.
\newblock In {\em \'{E}cole d'\'et\'e de {P}robabilit\'es de {S}aint-{F}lour,
  {XIV}---1984}, volume 1180 of {\em Lecture Notes in Math.}, pages 265--439.
  Springer, Berlin, 1986.

\end{thebibliography}
\end{document}